\numberwithin{equation}{section}
\newtheorem{thm}{Theorem}[section]
\newtheorem{lem}[thm]{Lemma}
\newtheorem{prop}[thm]{Proposition}
\newtheorem{defn}[thm]{Definition}
\theoremstyle{definition}
\theoremstyle{remark}
\newcommand{\ds}{\displaystyle}
\newcommand{\R}{\mathbb{R}}
\newcommand{\N}{\mathbb{N}}
\newcommand{\de}{\partial}
\newcommand{\eps}{\varepsilon}
\DeclareMathOperator{\spt}{spt}
{\left\{\begin{array}{@{}l@{}}}{\end{array}\right.}
\patchcmd{\abstract}{\scshape\abstractname}{\textbf{\abstractname}}{}{}
\def\@makefnmark{} 
\title{A stability result for the Steklov Laplacian Eigenvalue Problem with a spherical obstacle}
\author[G. Paoli, G. Piscitelli,  R. Sannipoli]{Gloria Paoli$^1$, Gianpaolo Piscitelli$^2$, Rossano Sannipoli$^1$}
\begin{document}
\maketitle
\noindent{\footnotesize{
\emph{$^1$Dipartimento di Matematica e Applicazioni ``R. Caccioppoli'', Universit\`a degli studi di Napoli Federico II, Via Cintia, Complesso Universitario Monte S. Angelo, 80126 Napoli, Italy.\\
$^2$Dipartimento di Ingegneria Elettrica e dell'Informazione \lq\lq M. Scarano\rq\rq, Universit\`a degli Studi di Cassino e del Lazio Meridionale, Via G. Di Biasio n. 43, 03043 Cassino (FR), Italy.}}\\
{\rm E-mail: gloria.paoli@unina.it, gianpaolo.piscitelli@unicas.it {\it(corresponding author)},\\ rossano.sannipoli@unina.it.}}
\markright{A STABILITY RESULT FOR THE STEKLOV LAPLACIAN EIGENVALUE}

\begin{abstract}
In this paper we study the first Steklov-Laplacian eigenvalue with an internal fixed spherical obstacle. We prove that the spherical shell locally maximizes the first eigenvalue among nearly spherical sets when both the internal ball and the volume are fixed.\\

\noindent\textsc{MSC 2010:} 28A75, 35J25, 35P15.  \\
\textsc{Keywords:} Laplacian, Steklov eigenvalue, Mixed boundary conditions, Local maximum.
\end{abstract}

\section{Introduction}
Let $\Omega_0\subset\mathbb{R}^n$, $n\geq 2$, be an open, bounded, connected set, with Lipschitz boundary such that $B_{r}\Subset\Omega_0$, where $B_{r}$ is the open ball of radius $r>0$ centered at the origin. Let us set $\Omega :=\Omega_0\setminus \overline{B_{r}}$, then we study  the following Steklov-Dirichlet boundary eigenvalue problem for the Laplacian:
\begin{equation}\label{case1intro}
\begin{cases}
\Delta u=0 & \mbox{in}\ \Omega\\
u=0&\mbox{on}\ \partial B_{r}, \\
{\de_\nu u}=\sigma (\Omega)u&\mbox{on}\ \partial\Omega_0\\ 
\end{cases}
\end{equation}
where  $\nu $ is the outer unit normal to  $\partial\Omega_0$. The study of the first eigenvalue of problem \eqref{case1intro} leads to the following minimization problem:
\begin{equation}\label{minSD_intro}
\sigma_1( \Omega)=\min_{\substack{w\in H^{1}_{\partial B_{r}}(\Omega)\\ w\not \equiv0} }     
\dfrac{\ds\int _{\Omega}|Dw|^2\;dx}{\ds\int_{\partial\Omega_0}w^2\;d\mathcal{H}^{n-1}}\;,
\end{equation}
where $H^{1}_{\partial B_{r}}(\Omega)$ is the set of Sobolev functions on $\Omega$ that vanish on $\partial B_{r}$ (for the precise definition see Section \ref{prel_sec}).  Notice also that the value $\sigma_1(\Omega)$ is the optimal constant in the Sobolev-Poincar\'e trace inequality:
\begin{equation}\label{traceineq}
\sigma_1(\Omega)||w||_{L^2(\partial\Omega_0)}\leq ||Dw||_{H^1_{\partial B_{r}}(\Omega)}.
\end{equation}
In this paper we treat the following shape optimization issue: 
\vspace*{2mm}
\begin{center}
{\it Which sets maximize $\sigma_1(\cdot)$ among sets containing the fixed ball $B_{r}$ and having prescribed
measure?}
\end{center}
\vspace*{2mm}
We  partially solve the problem of the optimality of $\sigma_1$, restricting our study to nearly spherical sets, that are set whose boundary can be parametrized on the sphere by means of a Lipschitz function with a small $W^{1,\infty}$-norm. The main result of the paper is the following.

\vspace{5pt}
{\bf Main Theorem.} {\it
Let $\Omega =\Omega_0\setminus\overline{B_{r}}$, with $\Omega_0$ a nearly spherical set. Then 
\begin{equation}\label{main_ineq_intro}
\sigma_1 (\Omega)\leq \sigma_1(A_{r,R}),
\end{equation}
where $A_{r,R}=B_{R}\setminus\overline{B_{r}}$, with $R>r>0$, is the spherical shell with the same volume as $\Omega$. Moreover the equality in \eqref{main_ineq_intro} holds if and only if $\Omega$ is a spherical shell.}
\vspace{5pt}

So, we study the optimal shape for $\sigma_1(\Omega)$ when both the volume of the domain and the radius of the internal ball are fixed. We also find some counterexamples showing that when only a volume constraint holds, then $\sigma_1$ is not upper bounded, hence we cannot speak about optimality. 

In order to prove the main Theorem, we obtain a stability result in quantitative form. In Theorem \ref{imp}, we find $K=K(n, |\Omega|)>0$, such that
\begin{equation*}
\sigma_1(A_{r,R}) \ge  \sigma_1(\Omega) \left( 1+K(n,|\Omega|) \int_{\mathbb{S}^{n-1}} v^2(\xi)\,d\mathcal{H}^{n-1}\right).
\end{equation*}
When $r=0$ and $\Omega_0$ is connected, the problem becomes the Steklov eigenvalue problem introduced by Steklov.  Optimal upper bounds for classical Steklov eigenvalues have been proved by several authors. When the domain is simply connected, a Weinstock inequality (\cite{W} for $n=2$ and \cite{BFNT} for higher dimensions) holds. This means that, among convex sets with prescribed perimeter, the maximum for the first Steklov Laplacian eigenvalue is reached by the ball. 
On the other hand, in \cite{B}, the author proved that the ball is the maximum for the same eigenvalue keeping the volume fixed. 

In the class of sets of the form $B_R(x_0)\setminus \overline B_r$ with $B_R(x_0)$ being a ball containing in $B_r$, the maximizer of $\sigma_1$ is the spherical shell, that is the annulus when the balls are concentric (see \cite{Ft,SV}).

A natural way to prove the result in \eqref{main_ineq_intro} is in finding the right test function for the Raylegh quotient in \eqref{minSD_intro}, in order to obtain the sought spectral inequality.
This approach works for other
 mixed boundary condition eigenvalue problems on perforated domains, e.g when using the so-called web functions (see \cite{CFG} 
 and the references  therein). 
 In particular, in \cite{PPT} it is proved that  the first eigenvalue of the $p$-Laplacian  with  external Robin and internal Neumann boundary conditions is maximum on spherical shells,  when the  volume and the  external perimeter are fixed. In  \cite{PW} can be found the  original proof for $p=2$ in the bidimensional case.   In \cite{DP} the authors prove that the first eigenvalue of the $p$-Laplacian with external Neumann and internal Robin boundary conditions is maximum on spherical shells  when the volume and the  internal $(n-1)$-quermassintegral are fixed. See \cite{H} for the original  proof in the plane and for $p=2$.

We use the solution $z$ of \eqref{minSD_intro} on the spherical shell to introduce the \textit{weighted volume} $V(\Omega)$ and the \textit{weighted perimeter} $P(\Omega)$:
\begin{equation*}
\begin{split}
V(\Omega)&:= \int_{\Omega} |\nabla z|^2 \,dx,\\
P(\Omega)&:= \int_{\partial\Omega_0} z^2 \,dx.
\end{split}
\end{equation*}
So we have
\[
\sigma_1(\Omega)\leq\frac{V(\Omega)}{P(\Omega)},
\]
with equality at least in the case $\Omega=A_{r,R}$, where $|\Omega|=|A_{r,R}|$. 
Unfortunately, using the web-function testing method in the Rayleigh quotient  \eqref{minSD_intro}, we do not obtain the correct inequality:
\begin{equation}\label{uppRay}
\frac{V(\Omega)}{P(\Omega)}\leq\frac{V(A_{r,R})}{P(A_{r,R})}.
\end{equation}
In this paper, we restrict our study to the class of sets $\Omega =\Omega_0\setminus\overline{B_{r}}$, with $\Omega_0$ nearly spherical. We use some classical stability results for isoperimetric problems (see e.g. 
\cite{Fu})
to parametrize the outer boundary of $\Omega$ and then we perform a Taylor expansion (as in \cite{FNT}) to translate the sought spectral inequality into the Poincar\'e inequality \eqref{traceineq}.

The outline of the paper follows. In Section \ref{prel_sec} we give some properties on the mixed Steklov-Dirichlet eigenvalue problem we are dealing with. In Section \ref{main_sec}, we prove the main Theorem.

\section{The Eigenvalue Problem}\label{prel_sec}
Let $R>r>0$, throughout this paper, we denote $B_{r}:=\{ x\in\R^n\ : \ |x|<r\}$ the ball centered at the origin with radius $r>0$; $A_{r,R}$ the spherical shell $B_R\setminus \overline{B}_r$ and
\begin{equation*}
\mathcal{A}_r := \left\{
\begin{split}
\Omega =\Omega_0\setminus \overline{B_{r}}\ : \, \Omega_0\subset\R^n \, & \text{open, bounded, connected,} \\
& \text{with Lipschitz boundary}, \, \text{s.t.} B_{r}\Subset\Omega_0
    \end{split}\right\}.
\end{equation*} 
Furthermore, we denote by $\mathcal H^{n-1}$ the $(n-1)$-dimensional Haussdorf measure and  by $|\cdot|$ the Lebesgue measure in $\mathbb{R}^n$.

Since we are  studying a Steklov eigenvalue problem with a spherical obstacle, we need to introduce the definition of a closed subspace of $H^1(\Omega)$,  that incorporates the Dirichlet boundary condition on $\partial{B_{r}}$. We denote the set of Sobolev functions on $\Omega$ that vanish on $\partial B_{r}$ by
$$H^1_{\partial B_{r}}(\Omega),$$ 
that is (see \cite{ET}) the closure in $H^1(\Omega)$ of the set of test functions
\begin{equation*}
C^\infty_{\partial B_{r}} (\Omega):=\{ u|_{\Omega}  \ | \ u \in C_0^\infty (\R^n),\ \spt (u)\cap \partial B_{r}=\emptyset \}. 
\end{equation*}

\subsection{Eigenvalues and Eigenfunctions}
We are dealing with the following boundary eigenvalue problem:
\begin{equation}\label{eigSD}
\begin{cases}
\Delta u=0 & \mbox{in}\ \Omega\\
u=0&\mbox{on}\ \partial B_{r},\\ 
\de_\nu u=\sigma(\Omega) u&\mbox{on}\ \partial\Omega_0
\end{cases}
\end{equation}
where $\nu $ is the outer normal to $\partial \Omega_0$. We give now the definitions and some geometric properties of eigenvalues and eigenfunctions of problem \eqref{eigSD}.
\begin{defn}
The real number $\sigma(\Omega)$ and the function $u\in H^1_{\partial B_{r}}(\Omega)$ are, respectively, called eigenvalue of \eqref{eigSD} and eigenfunction associated to $\sigma(\Omega)$, if and only if 
	\begin{equation*}
	\int_{\Omega} D uD\varphi \;dx=\sigma(\Omega)\int_{\partial\Omega_0}u \varphi \;d\mathcal{H}^{n-1},
	\end{equation*}
	for every $\varphi\in H^1_{\partial B_{r}}(\Omega)$.
\end{defn}
Furthermore, the first eigenvalue is variationally characterized by 
\begin{equation}\label{minSD}
\sigma_1( \Omega)=\min_{\substack{ w\in H^1_{\partial B_{r}}\\ w\not \equiv0}(\Omega) }      J[w],\,
\end{equation}
where
\begin{equation}
\label{ray_quo}
J[w]:=\dfrac{\ds\int _{\Omega}|Dw|^2\;dx}{\ds\int_{\partial\Omega_0}w^2\;d\mathcal{H}^{n-1}}.
\end{equation}
We point out that the condition of being orthogonal to constants in $L^2(\partial\Omega)$ is not required, unlike the classical Steklov eigenvalue (when $r=0$).

The following ensures the existence of  minimizers of problem \eqref{minSD}. 
\begin{prop}\label{existence_prop}
Let $r>0$ and $\Omega\in\mathcal{A}_r$, then there exists a function $u\in H^1_{\partial B_{r}}(\Omega)$ achieving  the minimum in \eqref{minSD} and satisfying problem \eqref{eigSD}. Moreover, $u$ is positive (or negative) in $\Omega$.
\end{prop}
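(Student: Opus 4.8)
The plan is to establish Proposition~\ref{existence_prop} by the direct method of the calculus of variations, following the standard template for trace-type functionals, and then to upgrade the minimizer to an eigenfunction via the Euler--Lagrange equation and to prove positivity by a truncation argument. First I would check that the Rayleigh quotient $J$ is well-defined and finite on $H^1_{\partial B_r}(\Omega)\setminus\{0\}$: the numerator is obviously finite, and the denominator is finite and, crucially, \emph{positive} — this is where the Dirichlet condition on $\partial B_r$ enters, since the trace inequality \eqref{traceineq} (equivalently, a Poincar\'e-type inequality on $H^1_{\partial B_r}(\Omega)$) guarantees that $\|w\|_{L^2(\partial\Omega_0)}=0$ together with $\|Dw\|_{L^2(\Omega)}<\infty$ and $w=0$ on $\partial B_r$ forces $w\equiv 0$. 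Hence $\sigma_1(\Omega)=\inf J\ge 0$ is a genuine infimum over a nonempty set, and in fact $\sigma_1(\Omega)>0$ once the compact embedding below is in hand.

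Next I would take a minimizing sequence $(u_k)\subset H^1_{\partial B_r}(\Omega)$, normalized so that $\|u_k\|_{L^2(\partial\Omega_0)}=1$ and $\int_\Omega |Du_k|^2\,dx\to\sigma_1(\Omega)$. By the Poincar\'e inequality on $H^1_{\partial B_r}(\Omega)$, the gradient bound together with the normalization controls $\|u_k\|_{H^1(\Omega)}$, so up to a subsequence $u_k\rightharpoonup u$ weakly in $H^1(\Omega)$. Since $H^1_{\partial B_r}(\Omega)$ is a closed subspace of $H^1(\Omega)$ (by definition, the closure of $C^\infty_{\partial B_r}(\Omega)$), it is weakly closed, so $u\in H^1_{\partial B_r}(\Omega)$. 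The trace operator $H^1(\Omega)\to L^2(\partial\Omega_0)$ is compact (here the Lipschitz regularity of $\partial\Omega_0$ is used), so $u_k\to u$ strongly in $L^2(\partial\Omega_0)$, giving $\|u\|_{L^2(\partial\Omega_0)}=1$, in particular $u\not\equiv 0$. By weak lower semicontinuity of the Dirichlet energy, $\int_\Omega |Du|^2\,dx\le\liminf_k\int_\Omega|Du_k|^2\,dx=\sigma_1(\Omega)$, so $J[u]\le\sigma_1(\Omega)$ and therefore $J[u]=\sigma_1(\Omega)$: $u$ is a minimizer.

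To see that $u$ solves \eqref{eigSD}, I would compute $\frac{d}{dt}\big|_{t=0}J[u+t\varphi]=0$ for every $\varphi\in H^1_{\partial B_r}(\Omega)$, which yields the weak formulation $\int_\Omega Du\,D\varphi\,dx=\sigma_1(\Omega)\int_{\partial\Omega_0}u\varphi\,d\mathcal H^{n-1}$; interior elliptic regularity then gives $\Delta u=0$ in $\Omega$ classically, $u=0$ on $\partial B_r$ holds in the trace sense by membership in $H^1_{\partial B_r}(\Omega)$, and the Robin-type condition $\partial_\nu u=\sigma_1(\Omega)u$ on $\partial\Omega_0$ follows from the weak formulation. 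Finally, for positivity I would test with $\varphi=u^-=\max\{-u,0\}\in H^1_{\partial B_r}(\Omega)$ (note $|u|$ and $u^\pm$ still vanish on $\partial B_r$ and $J[|u|]=J[u]$, so $|u|$ is also a minimizer), deduce that $|u|$ is a nonnegative harmonic function in $\Omega$, and invoke the strong maximum principle/Harnack inequality to conclude $|u|>0$ in the connected open set $\Omega$; replacing $u$ by $-u$ if needed gives $u>0$.

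The only real obstacle is the compactness of the trace embedding $H^1(\Omega)\hookrightarrow L^2(\partial\Omega_0)$ and the validity of the Poincar\'e inequality on $H^1_{\partial B_r}(\Omega)$ — both are standard consequences of the Lipschitz regularity of $\partial\Omega_0$ and the fact that functions vanish on the separate component $\partial B_r$ of the boundary, but one should state them cleanly (perhaps citing \cite{ET} for the space $H^1_{\partial B_r}(\Omega)$); everything else is the routine direct-method argument.
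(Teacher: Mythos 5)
Your proposal is correct and follows essentially the same route as the paper: the direct method with the normalization $\|u_k\|_{L^2(\partial\Omega_0)}=1$, the Poincar\'e inequality on $H^1_{\partial B_r}(\Omega)$, compactness of the trace operator, weak lower semicontinuity, and then positivity via $J[|u|]=J[u]$ combined with the Harnack inequality. The extra details you supply (weak closedness of the subspace, the Euler--Lagrange computation yielding the weak formulation) are points the paper leaves implicit, but they do not change the argument.
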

\begin{proof}
Let $u_k\in H^1_{\partial B_{r}}(\Omega)$ be a minimizing sequence of \eqref{minSD} such that $||u_k||_{L^2(\partial\Omega_0)}=1$. Since the minimum in \eqref{minSD} is positive, then there exists a constant $C>0$ such that $J[u_k]\leq C$ for every $ k\in \mathbb{N}$ and therefore $||Du_k||_{L^2(\Omega)}\leq {C}$. Moreover, a  Poincar\'e inequality in $H^1_{\partial B_{r}}(\Omega)$ holds and this implies that  $\{ u_k \}_{k\in\N}$ is a bounded sequence in $H^1_{\partial B_{r}}(\Omega)$.
Therefore, there exist a subsequence, still denoted by $u_k$, and a function $u\in H^1_{\partial B_{r}}(\Omega)$ with $||u||_{L^2(\partial\Omega_0)}=1$, such that $u_k\to u$ strongly in $L^2(\Omega)$, hence also almost everywhere, and $D u_k\rightharpoonup D u$ weakly in $L^2(\Omega)$. By the compactness of the trace operator,
\phantom{ }$u_k$ converges strongly  to $u$ in $L^2(\de \Omega)$ and almost everywhere on $\de \Omega$ to  $u$. Then, by weak lower semicontinuity we have
\begin{equation*}
	\lim\limits_{k\to+\infty }J[u_k]\geq J[u].
\end{equation*}
Hence the existence of a minimizer $u\in H^1_{\partial B_{r}}(\Omega)$ follows.
Moreover, the fact that $$J[u]=J[|u|]$$ implies that any eigenfunction must have constant sign on $\Omega$. So, by Harnack inequality, 
$u$ is strictly positive on $\Omega$.
\end{proof} 
Now we state the simplicity of the first eigenvalue of \eqref{eigSD}.
\begin{prop}
Let $r>0$ and $\Omega\in\mathcal{A}_r$, then the first eigenvalue $\sigma_1(\Omega)$ of \eqref{eigSD} is simple, that is all the associated eigenfunctions are scalar multiple of each other.
\end{prop}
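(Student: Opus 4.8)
The plan is to prove simplicity of $\sigma_1(\Omega)$ by a standard argument combining Proposition \ref{existence_prop} with a convexity/uniqueness property of the Rayleigh quotient. First I would recall that, by Proposition \ref{existence_prop}, every eigenfunction associated to $\sigma_1(\Omega)$ has constant sign and, by the Harnack inequality, can be taken strictly positive in $\Omega$. So suppose $u$ and $v$ are two eigenfunctions associated to $\sigma_1(\Omega)$, both normalized by $\|u\|_{L^2(\partial\Omega_0)}=\|v\|_{L^2(\partial\Omega_0)}=1$ and both strictly positive in $\Omega$. The goal is to show $u\equiv v$.

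The key step is a hidden-convexity argument applied to the functional $w\mapsto \int_\Omega |Dw|^2\,dx$ along the curve $w_t:=\big((1-t)u^2+t v^2\big)^{1/2}$ for $t\in[0,1]$. One checks that $w_t\in H^1_{\partial B_r}(\Omega)$ (it vanishes on $\partial B_r$ since $u,v$ do, and it is bounded in $H^1$ because $u,v$ are bounded away from $0$ on compact subsets and near $\partial\Omega_0$ the square-root is controlled), and that on $\partial\Omega_0$ we have $w_t^2=(1-t)u^2+tv^2$, so that $\int_{\partial\Omega_0} w_t^2\,d\mathcal H^{n-1}=1$ for all $t$. The Dirichlet energy satisfies the pointwise inequality
\begin{equation*}
|Dw_t|^2\le (1-t)|Du|^2+t|Dv|^2,
\end{equation*}
which is the classical convexity of $(a,b)\mapsto |{(1-t)^{1/2}a\cdot + \dots}|$; more precisely one expands $Dw_t$ and uses Cauchy--Schwarz to get $|Dw_t|^2 \le (1-t)|Du|^2 + t|Dv|^2$ with equality (pointwise a.e.) if and only if $Du/u = Dv/v$ a.e. Integrating gives
\begin{equation*}
\int_\Omega |Dw_t|^2\,dx \le (1-t)\int_\Omega|Du|^2\,dx + t\int_\Omega|Dv|^2\,dx = \sigma_1(\Omega),
\end{equation*}
so $J[w_t]\le \sigma_1(\Omega)$, hence $J[w_t]=\sigma_1(\Omega)$ by minimality. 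Therefore equality must hold in the pointwise inequality a.e. in $\Omega$, which forces $D(\log u)=D(\log v)$ a.e., so $u=c\,v$ for some constant $c>0$; the normalization on $\partial\Omega_0$ then gives $c=1$, i.e. $u\equiv v$.

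The main obstacle is the regularity/integrability bookkeeping needed to justify that $w_t\in H^1_{\partial B_r}(\Omega)$ and that the formal computation of $Dw_t$ and the pointwise convexity inequality are valid almost everywhere — in particular controlling $w_t$ and its gradient near the set where $u^2+v^2$ could in principle be small. Since both $u$ and $v$ are strictly positive on $\Omega$ by Harnack and, being harmonic, are smooth in the interior, the only delicate region is a neighbourhood of $\partial\Omega_0$; there one can either invoke that the trace of $w_t$ equals $((1-t)u^2+tv^2)^{1/2}\in L^2(\partial\Omega_0)$ directly and use the a.e. differentiation of Sobolev functions, or regularize by replacing $u^2,v^2$ with $u^2+\epsilon, v^2+\epsilon$, running the argument, and letting $\epsilon\to 0$. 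Once this technical point is settled, the equality-discussion in the convexity inequality immediately yields that all first eigenfunctions are scalar multiples of one another, which is the assertion.
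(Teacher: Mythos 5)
Your proof is correct in substance, but it takes a genuinely different route from the paper. You establish simplicity through the classical hidden-convexity argument: along $w_t=\big((1-t)u^2+tv^2\big)^{1/2}$ the Dirichlet energy is convex while the boundary normalization on $\partial\Omega_0$ is preserved, so minimality forces the pointwise equality $Du/u=Dv/v$ and hence proportionality. The paper instead exploits linearity of \eqref{eigSD}: given two first eigenfunctions $u,\tilde u$, with $\tilde u>0$ by Proposition \ref{existence_prop}, it chooses $\chi\in\R$ with $\int_\Omega(u-\chi\tilde u)\,dx=0$; since $u-\chi\tilde u$ is again an eigenfunction associated to $\sigma_1$, it has constant sign, and a sign-definite function with vanishing integral must be identically zero, so $u\equiv\chi\tilde u$. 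The paper's argument is a few lines and needs only the constant-sign property; yours requires the extra admissibility bookkeeping for $w_t$ (membership in $H^1_{\partial B_{r}}(\Omega)$ and the trace identity on $\partial\Omega_0$), but in exchange it does not use linearity and would carry over verbatim to nonlinear analogues such as the corresponding $p$-Laplacian problem. Two caveats on your technical remarks: (i) the regularization $u^2\mapsto u^2+\epsilon$ does not fix the delicate point — it destroys the vanishing on $\partial B_{r}$, since the regularized function equals $\sqrt{\epsilon}$ there; the clean justification is that $w_t=F(u,v)$ with $F$ Lipschitz and $F(0,0)=0$, so $w_t\in H^1(\Omega)$ with $0\le w_t\le |u|+|v|$, and one then invokes the lattice/ideal property of the partially vanishing trace space $H^1_{\partial B_{r}}(\Omega)$ (in the spirit of the characterization in \cite{ET}) together with the fact that traces commute with Lipschitz compositions; inside $\Omega$ the chain rule and the equality discussion are unproblematic because $u,v$ are harmonic and, by Harnack, strictly positive there. (ii) Deducing $u=c\,v$ from $D(\log u-\log v)=0$ a.e. uses that $\Omega=\Omega_0\setminus\overline{B_{r}}$ is connected; this holds for $n\ge 2$ (removing a compactly contained closed ball does not disconnect a connected open set) and is implicitly used by the paper as well, but it should be stated.
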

\begin{proof}
Let $u,\tilde{u}$ be two non trivial weak solutions of the problem (\ref{eigSD}). Since, by Proposition \ref{existence_prop}, we can assume that $\tilde{u}$ is positive in $\Omega$, then it is clear that
\begin{equation*}
    \int_{\Omega} \tilde{u} \,dx \neq 0.
\end{equation*}
So, we can find a real constant $\chi$ such that
\begin{equation} \label{nullmean}
    \int_{\Omega} (u-\chi \tilde{u}) \,dx = 0.
\end{equation}
Since $u-\chi \tilde{u}$  is still a solution of the problem (\ref{eigSD}), then it is also non-negative (or non-positive) in $\Omega$. Therefore, (\ref{nullmean}) implies that $u \equiv \chi \tilde{u}$ in $\Omega$ and the simplicity of $\sigma_1(\Omega)$ follows. 
\end{proof}
It is worth noticing that the first nontrivial eigenvalue for the classical Steklov-Laplacian problem (when $r=0$) on $B_R$ is $1/R$ and the corresponding eigenfunctions are the coordinate axis $x_i$, for $i=1,..,N$. This means that the first nontrivial eigenvalue has multiplicity $N$ and this makes a huge difference with problem \eqref{eigSD}, for which we proved that the simplicity holds.

On the other hand, it is easy to verify that both have the same scaling property:
\begin{equation}\label{scaling}
    \sigma(t\Omega)=\frac 1 t \sigma(\Omega), \quad\forall t\in\R.
\end{equation}

The first attempts to study the optimal shape of problem \eqref{eigSD} has been done on spherical shells, i.e. when $\Omega_0=B_R$, for $R>r>0$. We recall from \cite{SV}, the explicit expression of the first eigenfunction on the spherical shell $A_{r,R}$:	
\begin{equation}\label{radeigfun}
	z(\rho)=\begin{cases}
	\ln \rho-\ln r
	& {\rm for}\;\; n=2\vspace{0.1cm}\\
	\left( \dfrac{1}{r^{n-2}}-\dfrac{1}{\rho^{n-2}}\right)& {\rm for}\;\; n\geq 3\vspace{0.1cm},
	\end{cases}
	\end{equation}
	with $\rho=|x|$. This function is radial, positive, strictly increasing and it is associated to the following eigenvalue:
\begin{equation}\label{radeigval}
\sigma_1(A_{r,R})=
\begin{cases}
\frac{1}{R\log\left(\frac{R}{r}\right)}& {\rm for}\;\; n=2\vspace{0.1cm}\\
\frac{n-2}{R\left[\left(\frac{R}{r}\right)^{n-2}-1\right]}& {\rm for}\;\; n\geq 3\vspace{0.1cm}.\\ 
\end{cases}
\end{equation}
It is worth noting that, since problem \eqref{eigSD} and the classical Steklov ($r=0$) have the same scaling property \eqref{scaling}, then the shape functional $\Omega \to |\Omega|^\frac{1}N\sigma(\Omega)$ is scaling invariant, as in the classical case.

\subsection{A first upper bound}
We show an upper bound for $\sigma_1$ depending only by the dimension $n$, the measure of $\Omega$ and by the radius of the internal ball $r$.
\begin{prop} Let $r>0$ and $\Omega\in\mathcal{A}_r$, then
\begin{equation*}
\sigma_1(\Omega)\leq \dfrac{2}{n\omega_n^{\frac 1n}\left(\left(   \dfrac{|\Omega|}{2\omega_n}+r^n\right)^{1/n}-r\right)^2}|\Omega|^{1/n}.
\end{equation*}
\end{prop}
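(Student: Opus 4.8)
The plan is to construct an explicit admissible test function for the Rayleigh quotient \eqref{minSD} and estimate the quotient from above. Since $\sigma_1(\Omega)$ is a minimum, any choice of $w\in H^1_{\partial B_{r}}(\Omega)$ with $w\not\equiv 0$ gives an upper bound $\sigma_1(\Omega)\leq J[w]$. The natural candidate is a radial function depending only on $\rho=|x|$: we want $w$ to vanish on $\partial B_{r}$ (so that $w\in H^1_{\partial B_{r}}(\Omega)$), to have gradient as small as possible in $L^2(\Omega)$, and to have large $L^2$-norm on $\partial\Omega_0$. A simple and effective choice is the truncated distance-type function $w(x)=\min\{(\rho-r)_+,\,\ell\}$ for a suitable height $\ell>0$, or more directly $w(x)=(\rho-r)_+$ itself; then $|Dw|\leq 1$ a.e.\ on $\Omega$, so $\int_\Omega|Dw|^2\,dx\leq|\Omega|$.

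The crux is then to bound $\int_{\partial\Omega_0}w^2\,d\mathcal H^{n-1}$ from below in terms of $|\Omega|$, $n$ and $r$. Here the idea is a geometric/isoperimetric one: among all admissible $\Omega_0\supset B_{r}$ with $|\Omega|$ fixed, the value of $w$ on $\partial\Omega_0$ is controlled by how far $\partial\Omega_0$ sits from $\partial B_{r}$. Concretely, one observes that if we let $\bar\rho$ be the radius such that the spherical shell $B_{\bar\rho}\setminus\overline{B_r}$ has volume $|\Omega|/2$, i.e. $\omega_n(\bar\rho^n-r^n)=|\Omega|/2$ so that $\bar\rho=\left(\frac{|\Omega|}{2\omega_n}+r^n\right)^{1/n}$, then at least half of the volume of $\Omega$ lies outside $B_{\bar\rho}$; on that portion $w(x)=(\rho-r)_+\geq \bar\rho-r$. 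Slicing $\Omega\cap\{\rho>\bar\rho\}$ in spherical coordinates and using that the $(n-1)$-measure of $\partial\Omega_0\cap\{\text{ray through }\xi\}$ dominates the length of the radial slice divided by its maximal radius (a one-dimensional coarea / Cauchy–Schwarz argument on each ray), one transfers the lower bound $|\Omega\cap\{\rho>\bar\rho\}|\geq |\Omega|/2$ and the pointwise bound $w\geq\bar\rho-r$ into the trace lower bound
\[
\int_{\partial\Omega_0}w^2\,d\mathcal H^{n-1}\;\geq\; \frac{n\,\omega_n^{1/n}\,(\bar\rho-r)^2}{2\,|\Omega|^{1/n}}.
\]
Dividing $\int_\Omega|Dw|^2\,dx\leq|\Omega|$ by this quantity yields exactly the claimed bound.

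The main obstacle is the geometric trace estimate in the displayed inequality: one must carefully justify, for a general nearly-admissible Lipschitz domain (not just a shell), that a fixed fraction of the volume being ``far out'' forces a correspondingly large weighted trace on $\partial\Omega_0$. The clean way to do this is via the coarea formula: write $\int_{\partial\Omega_0}w^2\,d\mathcal H^{n-1}\geq\int_{\{\rho>\bar\rho\}\cap\partial\Omega_0}(\rho-r)^2\,d\mathcal H^{n-1}$, radially project $\partial\Omega_0$ onto $\partial B_1$, and compare with the volume integral $\int_{\Omega\cap\{\rho>\bar\rho\}}\,dx=\int_{\partial B_1}\!\int (\text{radial slice})\,t^{n-1}\,dt\,d\mathcal H^{n-1}(\xi)$, controlling the Jacobian of the radial projection from below on $\partial B_1$ by $\omega_n^{1/n}/|\Omega|^{1/n}$ after a further isoperimetric comparison. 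Once this geometric lemma is in place, the rest is the elementary substitution performed above, so I expect the write-up to be short modulo that one estimate.
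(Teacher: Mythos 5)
Your setup matches the paper's: the same truncated radial test function (vanishing on $\partial B_r$, constant equal to $\bar\rho-r$ outside $B_{\bar\rho}$, with $|Dw|\le 1$) and the same threshold radius $\bar\rho=\left(\frac{|\Omega|}{2\omega_n}+r^n\right)^{1/n}$ chosen so that $|A_{r,\bar\rho}|=|\Omega|/2$, hence $w\ge\bar\rho-r$ on $\partial\Omega_0\cap\{|x|\ge\bar\rho\}$. The problem is the key geometric step. What is actually needed is
\[
\mathcal H^{n-1}\bigl(\partial\Omega_0\cap\{|x|\ge\bar\rho\}\bigr)\;\ge\;c_n\,|\Omega|^{\frac{n-1}{n}},
\]
i.e. a \emph{relative} isoperimetric inequality for the region $\Omega_0\setminus\overline{B_{\bar\rho}}$ (whose volume is at least $|\Omega|/2$) with supporting set the ball $B_{\bar\rho}$: only the part of the boundary lying on $\partial\Omega_0$ may be counted, not the part resting on $\partial B_{\bar\rho}$. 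The paper gets this from the relative isoperimetric inequality outside convex domains \eqref{relative_isoperimetric} of Choe--Ghomi--Ritor\'e \cite{CGR}, which is a genuine theorem, not a routine coarea computation. Your proposed substitute --- radially projecting $\partial\Omega_0$ onto the sphere and ``controlling the Jacobian of the radial projection from below'' --- cannot work as described: the tangential Jacobian of $x\mapsto\bar\rho x/|x|$ on a hypersurface is $(\bar\rho/|x|)^{n-1}$ times the cosine of the angle between the normal and the radial direction, and this has no positive lower bound (it vanishes where $\partial\Omega_0$ is radial). All the projection argument yields is $\mathcal H^{n-1}(\partial\Omega_0\cap\{|x|\ge\bar\rho\})\ge\bar\rho^{\,n-1}\,\mathcal H^{n-1}(E)$, where $E\subset\mathbb S^{n-1}$ is the solid angle swept by $\Omega\setminus B_{\bar\rho}$; but a long thin radial protrusion carries a large fraction of the volume within an arbitrarily small solid angle, so $\mathcal H^{n-1}(E)$ is not bounded below by $|\Omega|^{(n-1)/n}$ and the needed estimate does not follow. (The volume of the protrusion is instead paid for by its \emph{lateral} boundary, which radial projection does not see; capturing that is exactly what the relative isoperimetric inequality does.) So either you invoke \cite{CGR} as the paper does, or you must supply an independent proof of a relative isoperimetric inequality outside a ball, which your sketch does not contain.

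Two smaller points. First, your displayed trace bound has the wrong power of the measure: dividing $\int_\Omega|Dw|^2\,dx\le|\Omega|$ by $\frac{n\omega_n^{1/n}(\bar\rho-r)^2}{2|\Omega|^{1/n}}$ gives $\frac{2|\Omega|^{1+1/n}}{n\omega_n^{1/n}(\bar\rho-r)^2}$, not the stated bound; the estimate you need is $\int_{\partial\Omega_0}w^2\,d\mathcal H^{n-1}\ge\frac{n\omega_n^{1/n}}{2}(\bar\rho-r)^2|\Omega|^{\frac{n-1}{n}}$, which follows from the displayed area bound above together with $|\Omega_0|\ge|\Omega|$. Second, the paper treats separately the easy case $B_{\bar\rho}\Subset\Omega_0$, where all of $\partial\Omega_0$ lies outside $B_{\bar\rho}$ and the classical isoperimetric inequality for $\Omega_0$ already suffices (giving the bound without the factor $2$); only when $\partial\Omega_0$ enters $B_{\bar\rho}$ is the relative inequality needed. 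This case distinction is cosmetic, but the missing relative isoperimetric input is not.
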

\begin{proof}
Let $\bar{R}>0$ be such that $|A_{r,\bar{R}}|=|\Omega|/2$, then $\bar{R}$ depends only by the dimension $n$, the measure $|\Omega|$ and $r$, that is
\begin{equation*}
	\bar{R}=\left(   \dfrac{|\Omega|}{2\omega_n}+r^n\right)^{1/n}.
\end{equation*}
Consider the function 
\begin{equation}\label{test_upper_bound}
\varphi(x)=\begin{cases}
|x|-r & \mbox{if  } \; r\leq|x| \leq \bar{R};  \vspace{0.2cm}\\
\bar{R}-r & \mbox{if  } \; |x| \geq \bar{R}.  \vspace{0.2cm} 
\end{cases}
\end{equation}
We distinguish now two cases. Firstly, we assume that $B_{\bar{R}}\Subset\Omega_0$, i.e. $d:={\rm dist}(\de B_{\bar{R}},\de\Omega_0 )>0$.
By using \eqref{test_upper_bound} as test function in the Rayleigh quotient \eqref{uppRay} and by the isoperimetric inequality, we obtain
\begin{equation}
\label{upper_dentro}
\sigma_1(\Omega)\leq \dfrac{|\Omega|}{\left( \bar{R}-r  \right)^2 P(\Omega_0)}\leq \dfrac{1}{n \omega_n^{\frac 1n} \left( \bar{R}-r  \right)^2}|\Omega|^{\frac 1n}.
\end{equation}
We consider now  the case $d=0$, that is when the ball $B_{\bar{R}}$ is not strictly contained in $\Omega_0$. Therefore, we divide the boundary of $\Omega_0$ in the two sets $\partial^{int}\Omega_0$ and $\partial^{ext}\Omega_0$ that live, respectively, inside and outside of $B_{\bar R}$. Using  the test function \eqref{test_upper_bound} in the Raylegh quotient \eqref{ray_quo}, we have
\begin{equation}\label{distance_zero}
	\sigma_1(\Omega)\leq \dfrac{|\Omega|}{\int_{\partial\Omega_0} |\varphi|^2\;d\mathcal{H}^{n-1}   }\leq \dfrac{|\Omega|}{(\bar{R}-r)^2 \int_{\de^{ext}\Omega_{0}}   1\; d\mathcal{H}^{n-1} }.
\end{equation} 
We recall that a relative isoperimetric inequality with supporting set $B_{\bar{R}}$ holds (see as a reference e.g. \cite{CGR}):
\begin{equation}\label{relative_isoperimetric}
\mathcal{H}^{n-1}(\de^{ext}\Omega_{0})\geq n \left(\dfrac{\omega_n}{2}\right)^{1/n} \left(  \frac{|\Omega_0|}{2} \right)^{1- \frac 1n}. 
\end{equation}
By using \eqref{relative_isoperimetric} in \eqref{distance_zero}, we have
\begin{equation}
\label{upper_fuori}
\sigma_1(\Omega)\leq \dfrac{2}{n\omega_n^{\frac 1n}(\bar{R}-r)^2}|\Omega|^\frac 1n.
\end{equation}
The conclusion follows by observing that the upper bound \eqref{upper_fuori} is greater than \eqref{upper_dentro}.
\end{proof}
We remark that, when a volume constraint for $\Omega$ holds, then the upper bound is still finite, when $r\to 0$. On the other hand, when $r\to\infty$, the first eigenvalue cannot be upper bounded. This, together with other examples we are giving in the rest of this Section, motivates the study the optimality of $\sigma_1$ when another constraint holds, besides the volume one.\\

\subsection{Volume constraint on the spherical shells}
In this paper we deal with geometric properties of the first eigenvalue of \eqref{eigSD}. We look for shapes minimizing $\sigma_1(\Omega)$, when both $\omega$, the volume of $\Omega$, and the radius $r$ of the internal ball are fixed. We show that, even among the spherical shells, $\sigma_1$ cannot be upper bounded when only a volume constraint holds. 

Let us consider the spherical shell $A_{r,R}$ with the volume constraint:
\begin{equation*}
	|A_{r,R}| = \omega_n (R^n-r^n)= \omega.
\end{equation*}
We show that both in bidimensional case and in higher dimension, $\sigma_1$ is not upper bounded in the class of sherical shells of fixed volume.\\	
Let $n=2$, then $R = \left(r^2 + \frac{\omega}{\pi}\right)^{\frac{1}{2}}$ and, by \eqref{radeigval}, we have
	\begin{equation*}
		\sigma_1(A_{r,R})= \frac{1}{\left(r^2 + \frac{\omega}{\pi}\right)^{\frac{1}{2}} \log\left(1 + \frac{\omega}{\pi r^2}\right)^{\frac{1}{2}}} = \frac{2}{r \left(1 + \frac{\omega}{\pi r^2}\right)^{\frac{1}{2}} \log\left(1 + \frac{\omega}{\pi r^2}\right)}.
	\end{equation*}
	Hence for $r$ big enough:
	\begin{equation*}
	\sigma_1(A_{r,R})\approx \frac{2}{r \left(1 + \frac{\omega}{2\pi r^2}\right)  \frac{\omega}{\pi r^2}} = \frac{2\pi r}{\omega \left(1 + \frac{\omega}{2\pi r^2}\right) },
	\end{equation*}
	and so 
	\begin{equation*}
		\lim_{r \to +\infty} \sigma_1(A_{r,R}) = +\infty.
	\end{equation*}\\
Let $n\ge 3$, then $R = \left(r^n + \frac{\omega}{\omega_n}\right)^{\frac{1}{n}}$ and
	\begin{align*}
	\sigma_1(A_{r,R}) &= \frac{n-2}{r \left(1 + \frac{\omega}{\omega_n r^n}\right)^{\frac{1}{n}}\left[\left(1 + \frac{\omega}{\omega_n r^n}\right)^{1-\frac{2}{n}}-1\right]} =\\
	&=\frac{n-2}{r \left[\left(1 + \frac{\omega}{\omega_n r^n}\right)^{1-\frac{1}{n}}-\left(1 + \frac{\omega}{\omega_n r^n}\right)^{\frac{1}{n}}\right]}.
	\end{align*}
	Again, if $r$ is big
	\begin{equation*}
	\sigma_1(A_{r,R}) \approx \frac{n-2}{r \left[1+\left(1 -\frac{1}{n}\right) \frac{\omega}{\omega_n r^n}-1 -\frac{1}{n} \frac{\omega}{\omega_n r^n}\right]} = \frac{n \omega_n}{\omega} r^{n-1}.
	\end{equation*}
	and hence again
	\begin{equation}\label{r1toinfty}
	\lim_{r \to +\infty} \sigma_1(A_{r,R}) = +\infty.
	\end{equation}
Further, it is clear that, in any dimension, we have
\begin{equation}\label{r1to0}
\lim_{r \to 0^+} \sigma_1(A_{r,R}) = 0.
\end{equation}
The limiting results \eqref{r1toinfty} and \eqref{r1to0} motivate the fact that it is not sufficient to fix the volume to study the first eigenvalue $\sigma_1$. Indeed, when $r$ is too big, it is not possible to find an upper bound, and, on the other hand, when $r$ is too small, the eigenvalue is trivial. We remark that, in the class of sets of the form $B_R(x_0)\setminus \overline B_r$ with $B_R(x_0)$ being a ball containing in $B_r$, the maximizer of $\sigma_1$ is the spherical shell (see \cite{Ft}).

\subsection{Spherical shell with fixed difference between radii.}
It is clear now that we cannot study the shape optimization for $\sigma_1$ when only a volume constraint holds. On the other hand, it could be interesting to understand if we can study the shape optimization for double connected domains, when only one geometric quantity is fixed. Here, e.g., we briefly study the behavior of the spherical shell when the distance between the radii is fixed. 
Let $d$ be a positive real number such that
\begin{equation*}
	R-r = d,
\end{equation*}
so that $R = r + d$ and $\frac{R}{r} = 1 + \frac{d}{r}$.\\
If $n=2$, then for $r$ big enough, we have
	\begin{equation*}
	\sigma_1(A_{r,R})= \frac{1}{(r + d) \log \left( 1 + \frac{d}{r}\right)}\approx \frac{r}{r d + d^2},
	\end{equation*}
	and hence
	\begin{equation*}
	\lim_{r \to +\infty} \sigma_1(A_{r,R}) = \frac{1}{d}.
	\end{equation*}\\
If $n\ge 3$, we have
	\begin{align*}
	\sigma_1(A_{r,R}) &= \frac{n-2}{(r + d) \left[ \left(1+\frac{d}{r}\right)^{n-2}-1\right]}\\
	& \approx \frac{n-2}{(r + d) \left[ 1+(n-2)\frac{d}{r}-1\right]} = \frac{r}{r d + d^2},
	\end{align*}
and hence
	\begin{equation*}
	\lim_{r \to +\infty} \sigma_1(A_{r,R}) = \frac{1}{d}.
	\end{equation*}
Furthermore, in any dimensions, we have
\begin{equation*}
	\lim_{r \to 0^+} \sigma_1(A_{r,R}) = 0
\end{equation*}
The case of  $r$ small is again trivial. On the other hand, $\sigma_1$ is upper bounded for any value of $R$ by the reciprocal of the difference between the radii $d$. The fact that a uniform upper bounds holds for spherical shells when only the difference between the radii is fixed, suggests that could be interesting to study the shapes minizing $\sigma_1$ in the class of double connected sets when only the width is fixed.

\section{Main result}\label{main_sec}
In this section we prove that the spherical shell is a local maximizer 
for the first eigenvalue of \eqref{eigSD} among nearly spherical sets with fixed volume, containing $B_{r}$, for a fixed value $r>0$. 
Firstly, we give the definition of nearly spherical sets.

\begin{defn}
Let $n\geq 2$. An open, bounded set $\Omega_0\subset \mathbb{R}^n$ with $0\in \Omega_0$ is said a nearly spherical set parametrized by $v$ if there exists $v\in W^{1,\infty}(\mathbb{S}^{n-1})$ such that
\begin{equation}\label{nearly}
 \partial    \Omega_0= \left\{y \in \R^n \colon y=R\xi  (1+v(\xi)), \, \xi \in \mathbb{S}^{n-1}\right\},
\end{equation}
where $R$ 
is the radius of the ball having the same measure of $\Omega_0$ and   $||v||_{W^{1,\infty}
}\le1$.
\end{defn}
The volume of a nearly spherical set is given by 
	\begin{equation*}
	|\Omega_0|=\dfrac{1}{n}\int_{\mathbb{S}^{n-1}} \left(1+v(\xi) \right)^n\;d\mathcal{H}^{n-1}.
	\end{equation*}

The class of nearly spherical sets has a peculiar importance in shape optimization theory, in particular for stability results for spectral inequalities. In this paper, we are considering sets $\Omega=\Omega_0\setminus \overline{B}_r$ beloging to $\mathcal{A}_r$ with $r>0$, with $\Omega_0$ nearly spherical.  Now, we are in position to state the main Theorem of this article.
\begin{thm}
\label{main_result}
Let $n\geq 2$, $r>0$, $\omega>0$ and  let  $R>r$  be such that $|A_{r,R}|=\omega$.  There exists $\varepsilon=\varepsilon(n,r,\omega)>0$ such that, for any $\Omega=\Omega_0\setminus\overline{B_{r}}$ belonging to $\mathcal{A}_r$, with $\Omega_0$ nearly spherical set parametrized by $v$ such that  $||v||_{W^{1,\infty}}\le  \varepsilon$ and $|\Omega|=\omega$, then
	\begin{equation}\label{main_ineq}
	\sigma_1 (\Omega)\leq \sigma_1(A_{r,R}).
	\end{equation}
		Moreover the equality in \eqref{main_ineq} holds if and only if $\Omega$ is a spherical shell.
	\end{thm}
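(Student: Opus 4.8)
The plan is to reduce the spectral inequality \eqref{main_ineq} to the trace Poincar\'e inequality \eqref{traceineq} on the spherical shell via a Taylor expansion in the perturbation parameter $v$. As explained in the introduction, the natural test function is the eigenfunction $z$ of the shell $A_{r,R}$, extended radially (it is already defined on all of $\R^n\setminus\overline{B_r}$ by the formula \eqref{radeigfun}), and used as a competitor in the Rayleigh quotient \eqref{ray_quo} for $\Omega=\Omega_0\setminus\overline{B_r}$. This gives $\sigma_1(\Omega)\le V(\Omega)/P(\Omega)$, where $V(\Omega)=\int_\Omega|\nabla z|^2\,dx$ and $P(\Omega)=\int_{\partial\Omega_0}z^2\,d\mathcal H^{n-1}$. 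Since $z$ is radial, write $z(\rho)=g(\rho)$ with $g$ as in \eqref{radeigfun}; then $V$ can be computed by integrating $|g'(\rho)|^2$ over the radial variable from $r$ to $R(1+v(\xi))$ for each $\xi\in\mathbb S^{n-1}$, and $P$ is a weighted surface integral over $\partial\Omega_0$. The key point is that $\sigma_1(A_{r,R})=V(A_{r,R})/P(A_{r,R})$, so the inequality to be proved becomes $V(\Omega)P(A_{r,R})\le V(A_{r,R})P(\Omega)$.

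First I would parametrize everything explicitly in terms of $v$. For the weighted volume, $V(\Omega)=\int_{\mathbb S^{n-1}}\!\!\int_r^{R(1+v(\xi))} g'(\rho)^2\rho^{n-1}\,d\rho\,d\mathcal H^{n-1}(\xi)$; note $g'(\rho)^2\rho^{n-1}$ is constant in $\rho$ for the specific eigenfunction (indeed $g'(\rho)=c\,\rho^{1-n}$ in both cases $n=2$ and $n\ge 2$), which makes this integral particularly clean — it is linear in the upper limit. For the weighted perimeter, I would use the standard surface-area formula for a graph over the sphere, $P(\Omega)=\int_{\mathbb S^{n-1}} g(R(1+v))^2 (R(1+v))^{n-1}\sqrt{1+\frac{|\nabla_\tau v|^2}{(1+v)^2}}\,d\mathcal H^{n-1}$, expand the square root and $g$ to second order in $v$ and $\nabla_\tau v$, and similarly expand $V$. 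The volume constraint $|\Omega|=\omega=|A_{r,R}|$ gives $\int_{\mathbb S^{n-1}}[(1+v)^n-1]\,d\mathcal H^{n-1}=0$, which to second order reads $\int v\,d\mathcal H^{n-1}=-\frac{n-1}{2}\int v^2\,d\mathcal H^{n-1}+o(\|v\|^2)$; this relation is what kills the first-order terms and, crucially, it is used to turn the second-order remainder into a positive multiple of $\int_{\mathbb S^{n-1}}v^2$ (together with a $\int|\nabla_\tau v|^2$ term that has a favorable sign). The upshot should be an expansion of the form $\frac{V(A_{r,R})}{P(A_{r,R})}-\frac{V(\Omega)}{P(\Omega)} = \sigma_1(A_{r,R})\big(c_1(n,r,\omega)\int_{\mathbb S^{n-1}}v^2 + c_2(n,r,\omega)\int_{\mathbb S^{n-1}}|\nabla_\tau v|^2\big)+o(\|v\|_{W^{1,\infty}}^2)$ with $c_1,c_2\ge 0$ and $c_1>0$; the quantitative statement advertised as Theorem~\ref{imp} follows with $K$ built from $c_1$.

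The main obstacle is controlling the signs of the coefficients in the second-order expansion: the first-order contributions from $V$ and from $P$ must cancel after imposing the volume constraint, and then the second-order term must come out nonnegative. This requires a careful bookkeeping of the competition between the bulk term $V$ (which grows when $v>0$ because the domain is larger) and the boundary term $P$ (which also grows, but with the extra weight $g^2\rho^{n-1}$ that is increasing in $\rho$, so that $P$ grows faster than $V$ in the right regime). Concretely one has to verify an inequality of the type: the logarithmic derivative of $\rho\mapsto \int_r^\rho g'(s)^2 s^{n-1}ds$ compared against that of $\rho\mapsto g(\rho)^2\rho^{n-1}$, evaluated at $\rho=R$, has the sign dictated by $\sigma_1(A_{r,R})=V(A_{r,R})/P(A_{r,R})$ itself — this is essentially the statement that $z$ satisfies the Steklov condition $\partial_\nu z=\sigma_1 z$ on $\partial B_R$, so the first-order identity is not a coincidence but a consequence of the Euler--Lagrange equation, which I would exploit directly rather than by brute-force computation. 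For the gradient term, one invokes that $\int_{\mathbb S^{n-1}}|\nabla_\tau v|^2$ appears with a nonnegative coefficient so it only helps; alternatively, if its coefficient were delicate, one could discard it by using the Poincar\'e inequality on $\mathbb S^{n-1}$ only in the direction needed. Finally, the equality case: \eqref{main_ineq} is an equality exactly when both $v\equiv$ const (forced by $\int v^2=0$ when $c_1>0$) and, via the volume constraint, that constant is $0$, i.e. $\Omega_0=B_R$ and $\Omega=A_{r,R}$; the choice of $\varepsilon=\varepsilon(n,r,\omega)$ is dictated by requiring the $o(\|v\|^2)$ remainder to be dominated by $c_1\int v^2$.
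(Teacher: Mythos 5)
Your strategy is the same as the paper's: test the Rayleigh quotient \eqref{ray_quo} with the shell eigenfunction $z$ of \eqref{radeigfun}, reduce to the ratio $V(\Omega)/P(\Omega)$, parametrize over $\mathbb{S}^{n-1}$, expand to second order in $v$ (Fuglede-type estimates), use the volume constraint to convert $\int_{\mathbb{S}^{n-1}}v$ into $-\tfrac{n-1}{2}\int_{\mathbb{S}^{n-1}}v^2$ up to $C\varepsilon\|v\|_{L^2}^2$, and conclude with a quantitative estimate and the equality case. The gap is in the decisive step, namely the sign of the resulting quadratic form, which you assert rather than verify, and your assertion is wrong in general. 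Writing, as in the proof of Proposition \ref{key_prop}, $Q_1(1)=f_R(1)h_R'(1)-f_R'(1)h_R(1)$, $Q_2(1)=Q_1'(1)$, $Q_3(1)=f_R(1)h_R(1)$, the coefficient of $\int_{\mathbb{S}^{n-1}}v^2$ that survives after using the volume constraint, \emph{before} touching the gradient term, is $\tfrac{n-1}{2}Q_1(1)+\tfrac12 Q_2(1)$; for $n=2$ this equals $\tfrac{1}{2R}\bigl(2h+2\sqrt h-h\sqrt h\bigr)$ with $h=(\ln(R/r))^2$, which is negative as soon as $\ln(R/r)>1+\sqrt 3$. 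Hence your claimed expansion with $c_1>0$ for the $\int v^2$ term, and the remark that the $\int|\nabla_\tau v|^2$ term ``only helps'' or ``could be discarded'', fail for thick shells: the gradient term $\tfrac12 Q_3(1)\int|\nabla_\tau v|^2$ must be converted into a $v^2$-contribution through the spherical Poincar\'e inequality $\int|\nabla_\tau v|^2\ge (n-1)(1-C\varepsilon)\int v^2$ (Lemma \ref{poincar}), and positivity holds only for the combined constant $\tfrac12\bigl[(n-1)\bigl(Q_1(1)+Q_3(1)\bigr)+Q_2(1)\bigr]$, which equals $\tfrac{h+\sqrt h}{R}$ when $n=2$ and requires a separate explicit computation for $n\ge 3$. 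This verification is the core of Proposition \ref{key_prop}; in your proposal the Poincar\'e step is presented as an optional fallback and the sign bookkeeping is left undone, so the argument as written does not close.

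Two smaller points. First, $g'(\rho)^2\rho^{n-1}$ is not constant in $\rho$ (it equals $c^2\rho^{1-n}$), so $V(\Omega)$ is not linear in the upper limit $R(1+v)$; it is still explicitly computable, and the paper bypasses the radial integration by harmonicity, writing $V(\Omega)=\int_{\partial\Omega_0}z\,\partial_\nu z\,d\mathcal H^{n-1}$, which after the change of variables gives, up to the common factor $R^{n-1}$, the numerator in \eqref{pertquot}. Second, your observation that the first-order structure reflects the Steklov condition $\partial_\nu z=\sigma_1(A_{r,R})z$ on $\partial B_R$ correctly explains why the first variation is proportional to $\int_{\mathbb{S}^{n-1}}v$ (hence harmless under the volume constraint), but it does not by itself yield the second-order positivity discussed above, which is where the actual work lies.
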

Let us remark that, in order to have $B_{r}\Subset\Omega_0$, we need to require that $\eps\leq 1-r/R$ to verify that $|y|\ge r$, that is $R (1+v(\xi))  \ge r$. Moreover, we observe that, since all the quantities involved are translation invariant, the result in Theorem \ref{main_result} holds also  among nearly spherical sets with fixed volume and containing a fixed internal ball.

Recalling the explicit expression \eqref{radeigfun} of the first eigenfunction $z$ on the spherical shell $A_{r,R}$, we define the \textit{weighted volume} and the \textit{weighted perimeter} as:
\begin{align*}
V(\Omega)&:= \int_{\Omega} |\nabla z|^2 \,dx,\\
P(\Omega)&:= \int_{\partial\Omega_0} z^2 \,dx.
\end{align*} 
Furthermore, to simplify the notations, we set, for $n=2$,
\begin{align}
 \label{hrho1}
h_{R} (t)&= (\ln(tR) -\ln r)^2\\
 \label{frho1}
f_{R}(t) &= \frac{h'_{R}(t)}{2R} = \frac{\sqrt{h_{R} (t)}}{(tR)}, 
\end{align}
and for $n\ge 3$
\begin{align} 
\label{hrho}
h_{R} (t)&= \left( \dfrac{1}{r^{n-2}}-\dfrac{1}{(tR)^{n-2}}\right)^2\\
\label{frho}
f_{R}(t) &= \frac{h'_{R}(t)}{2R} = \frac{n-2}{(tR)^{n-1}} \left( \dfrac{1}{r^{n-2}}-\dfrac{1}{(tR)^{n-2}}\right),
\end{align}
where $R$ is the radius of the ball with the same volume of $\Omega_0$ and $t\ge\frac{r}R$.

Now, we write the Raylegh quotient \eqref{ray_quo} using the parametrization in \eqref{nearly}.

\begin{lem}
Let $n\geq 2$, $r>0$, $\omega>0$ and  let  $R>r$  be such that $|A_{r,R}|=\omega$.  For any  $0<\varepsilon <1-r/R$ and for any   $\Omega=\Omega_0\setminus\overline{B_{r}}$ belonging to $\mathcal{A}_r$, with $\Omega_0$ nearly spherical set parametrized by $v$ such that $||v||_{W^{1,\infty}}\le  \varepsilon$ and $|\Omega|=\omega$, then
\begin{equation} \label{pertquot}
\sigma_1 (\Omega) \le \frac{V(\Omega)}{P(\Omega)} = \frac{\ds \int_{\mathbb{S}^{n-1}} f_{R}(1+v(\xi))(1+v(\xi))^{n-1} \, d\mathcal{H}^{n-1}}{\ds \int_{\mathbb{S}^{n-1}} h_{R}(1+v(\xi))(1+v(\xi))^{n-1}\sqrt{1+\frac{|\nabla v(\xi)|^2}{(1+v(\xi))^2}}\,d\mathcal{H}^{n-1}}.
\end{equation}
Moreover if $\Omega =  A_{r,R}$, then equality holds in \eqref{pertquot} and $\sigma_1(A_{r,R})= \, \frac{\ds f_{R}(1)}{\ds h_{R}(1)}$.
\end{lem}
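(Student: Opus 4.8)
The plan is to use the function $z$ from \eqref{radeigfun}, extended radially, as a test function in the Rayleigh quotient \eqref{ray_quo}, and then to carry out the change of variables $y = R\xi(1+v(\xi))$ that parametrizes $\partial\Omega_0$ in \eqref{nearly}. The inequality $\sigma_1(\Omega)\le V(\Omega)/P(\Omega)$ is immediate from \eqref{minSD}, since $z$ (suitably extended to all of $\Omega$) lies in $H^1_{\partial B_r}(\Omega)$ — it vanishes on $\partial B_r$ by construction in \eqref{radeigfun} — and $\int_\Omega |\nabla z|^2 = V(\Omega)$, $\int_{\partial\Omega_0} z^2 = P(\Omega)$ by definition. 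The only point needing care is that the radial extension of $z$ past $\partial A_{r,R}$ is still harmonic-free of singularities and belongs to $H^1$; since $z$ is smooth and increasing on $[r,\infty)$ away from the origin and $\Omega$ avoids $\overline{B_r}$, this is routine.

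The core of the lemma is the identity for $V(\Omega)/P(\Omega)$. For the numerator, I would compute $\int_\Omega |\nabla z|^2\,dx$ in spherical coordinates: since $z=z(\rho)$ is radial, $|\nabla z(\rho)|^2 = z'(\rho)^2$, and integrating the ODE identity $\operatorname{div}(z'(\rho)\,\nabla\rho)=$ (radial Laplacian of $z$, which is zero for $z$ harmonic) lets me reduce the volume integral over $\Omega = \Omega_0\setminus\overline{B_r}$ to a boundary integral over $\partial\Omega_0$ (the contribution on $\partial B_r$ vanishes because $z=0$ there, after an integration by parts using $\Delta z = 0$). Concretely, $\int_\Omega |\nabla z|^2 = \int_{\partial\Omega_0} z\,\partial_\nu z\,d\mathcal H^{n-1}$, and along the ray through $\xi\in\mathbb S^{n-1}$ the outer point is at radius $\rho = R(1+v(\xi))$; writing $z(\rho)^2 = h_R(\rho/R)$ and $z(\rho)z'(\rho) = R\,f_R(\rho/R)\cdot(\text{Jacobian factor})$ via \eqref{hrho1}–\eqref{frho}, together with the surface-area element $\rho^{n-1}\,d\mathcal H^{n-1}$ on the sphere of radius $\rho$, gives the stated numerator $\int_{\mathbb S^{n-1}} f_R(1+v(\xi))(1+v(\xi))^{n-1}\,d\mathcal H^{n-1}$. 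For the denominator, I parametrize $\partial\Omega_0$ directly by \eqref{nearly}: the surface measure of the graph $\xi\mapsto R\xi(1+v(\xi))$ is $R^{n-1}(1+v(\xi))^{n-1}\sqrt{1+\frac{|\nabla v(\xi)|^2}{(1+v(\xi))^2}}\,d\mathcal H^{n-1}$ (the classical formula for the boundary element of a nearly spherical set), and $z^2$ evaluated there is $h_R(1+v(\xi))$, which yields the denominator after the $R^{n-1}$ factors cancel in the quotient.

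The main obstacle is bookkeeping the Jacobian factors correctly — in particular making sure the normal derivative $\partial_\nu z$ on $\partial\Omega_0$ is handled properly, since $\nu$ is the outer normal to the perturbed surface, not the radial direction. Here I would observe that $\nabla z = z'(\rho)\,\frac{x}{|x|}$ is radial, so $\partial_\nu z = z'(\rho)\,(\nu\cdot \frac{x}{|x|})$; when I express $\int_{\partial\Omega_0} z\,\partial_\nu z\, d\mathcal H^{n-1}$ the factor $(\nu\cdot\frac{x}{|x|})\,d\mathcal H^{n-1}$ is exactly the projected area element, which on the graph over $\mathbb S^{n-1}$ equals $\rho^{n-1}\,d\mathcal H^{n-1}$ with $\rho = R(1+v(\xi))$ — the square-root Jacobian cancels against the cosine of the angle between $\nu$ and the radial direction. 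This is the one identity that must be stated carefully; everything else is a direct substitution. Finally, the equality case $\Omega = A_{r,R}$ corresponds to $v\equiv 0$, for which $z$ is exactly the first eigenfunction of \eqref{eigSD} by \eqref{radeigfun}, so $\sigma_1(A_{r,R}) = V(A_{r,R})/P(A_{r,R}) = f_R(1)/h_R(1)$, the last equality following since $v\equiv 0$ collapses both integrands to constants times $\mathcal H^{n-1}(\mathbb S^{n-1})$.
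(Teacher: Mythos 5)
Your proposal is correct and follows essentially the same route as the paper: test the Rayleigh quotient \eqref{minSD} with the radial function $z$ from \eqref{radeigfun}, integrate by parts (using $\Delta z=0$ in $\Omega$ and $z=0$ on $\partial B_r$) to rewrite $V(\Omega)=\int_{\partial\Omega_0} z\,\partial_\nu z\,d\mathcal H^{n-1}$, and then apply the change of variables given by the parametrization \eqref{nearly}. You merely make explicit the Jacobian bookkeeping (the cancellation of the cosine factor $\nu\cdot x/|x|$ against the square-root area element in the numerator), which the paper leaves implicit in the phrase ``the conclusion follows using the change of variables.''
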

 \begin{proof} From the variational characterization \eqref{minSD}  of $\sigma_1(\Omega)$, 
 we have
 \begin{equation*}
 \sigma_1(\Omega) \le\dfrac{V(\Omega)}{P(\Omega)} = \frac{\ds \int_{\Omega}|\nabla z|^2 \,dx}{\ds \int_{\partial \Omega_0}z^2 \, d\mathcal{H}^{n-1}} = \frac{\ds \int_{\partial \Omega_0}\frac{\partial z}{\partial \nu} z \,d\mathcal{H}^{n-1}}{\ds \int_{\partial \Omega_0}z^2 \, d\mathcal{H}^{n-1}}.
 \end{equation*}
The conclusion follows using the change of variables in \eqref{nearly}.
 \end{proof}
We recall the following result, whose proof can be found in  \cite{Fu}.
\begin{lem} \label{fuglede} Let $n\geq 2$ and $R>0$. There exists a constant $C=C(n)>0$ such that for any $0<\varepsilon<1$ and for any $v$ parametrizing a nearly spherical set $\Omega_0$ such that $||v||_{W^{1,\infty}}\le  \varepsilon$ and $|\Omega_0|=|B_{R}|$, then
\begin{align*}
&  \left|(1+v)^{n-1} -\left(1+(n-1)v + (n-1)(n-2)\frac{v^2}{2}\right)\right| \le C\varepsilon v^2\ \text{on}\ \mathbb{S}^{n-1},\\
& \ds 1+ \frac{|\nabla v|^2}{2}-\sqrt{1+\frac{|\nabla v|^2}{(1+v)^2}} \le C\varepsilon \left( v^2 + |\nabla v|^2\right)\ \text{on}\ \mathbb{S}^{n-1},\\
& \ds \left|\int_{\mathbb{S}^{n-1}} v(\xi)\,d\mathcal{H}^{n-1} + \frac{n-1}{2} \int_{\mathbb{S}^{n-1}} v^2(\xi)\,d\mathcal{H}^{n-1} \right| \le C\varepsilon \|v\|^2_{L^2}.
\end{align*}
\end{lem}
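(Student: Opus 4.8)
The plan is to prove each of the three estimates by a second-order Taylor expansion; in every case the extra factor $\varepsilon$ on the right-hand side is produced by extracting one power of the small quantity $\|v\|_{L^\infty}$ (or $\|\nabla v\|_{L^\infty}^2$) out of a cubic (or higher-order) remainder term. Indeed, since $\|v\|_{W^{1,\infty}}\le\varepsilon$ one has, pointwise on $\mathbb{S}^{n-1}$, $|v|^3=|v|\,v^2\le\varepsilon v^2$ and $|\nabla v|^4=|\nabla v|^2\,|\nabla v|^2\le\varepsilon^2|\nabla v|^2$, and $1+v\ge1-\varepsilon>0$, so all the factors $(1+v)^{-1}$ below stay bounded; it is enough to argue for $\varepsilon$ small, which is the only regime used in Theorem~\ref{main_result}. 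For the first inequality I would apply Taylor's formula with Lagrange remainder to $g(t)=(1+t)^{n-1}$ on the segment from $0$ to $v(\xi)$: its second-order polynomial at $0$ is $1+(n-1)v+\tfrac{(n-1)(n-2)}{2}v^2$ and the remainder is $\tfrac16 g'''(\theta)v^3$ for some $\theta$ between $0$ and $v(\xi)$, where $g'''(t)=(n-1)(n-2)(n-3)(1+t)^{n-4}$. This vanishes identically for $n=2,3$, while for $n\ge4$ one has $0<1+\theta<2$ and hence $|g'''(\theta)|\le C(n)$; therefore the left-hand side equals $\tfrac16|g'''(\theta)|\,|v|^3\le C(n)\varepsilon v^2$.

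For the second inequality I would first record the elementary estimate $\sqrt{1+s}\ge1+\tfrac s2-\tfrac{s^2}{8}$ for $s\ge0$: the function $\psi(s)=\sqrt{1+s}-1-\tfrac s2+\tfrac{s^2}{8}$ satisfies $\psi(0)=\psi'(0)=0$ and $\psi''(s)=\tfrac14\bigl(1-(1+s)^{-3/2}\bigr)\ge0$, so $\psi\ge0$ on $[0,\infty)$. Applying this with $s=|\nabla v|^2(1+v)^{-2}\ge0$ gives
\[
1+\frac{|\nabla v|^2}{2}-\sqrt{1+\frac{|\nabla v|^2}{(1+v)^2}}\ \le\ \frac{|\nabla v|^2}{2}\Bigl(1-\frac{1}{(1+v)^2}\Bigr)+\frac{|\nabla v|^4}{8(1+v)^4}.
\]
Since $\bigl|1-(1+v)^{-2}\bigr|=|2v+v^2|(1+v)^{-2}\le C\varepsilon$ and $|\nabla v|^4(1+v)^{-4}\le C|\nabla v|^4\le C\varepsilon^2|\nabla v|^2\le C\varepsilon|\nabla v|^2$, the right-hand side is bounded by $C\varepsilon|\nabla v|^2\le C\varepsilon\bigl(v^2+|\nabla v|^2\bigr)$; and when the left-hand side is negative there is nothing to prove.

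For the third inequality I would invoke the volume constraint: since $|\Omega_0|$ equals the stated integral, the normalization $|\Omega_0|=|B_R|$ is equivalent to $\int_{\mathbb{S}^{n-1}}\bigl[(1+v)^n-1\bigr]\,d\mathcal{H}^{n-1}=0$. Writing $(1+t)^n=1+nt+\tfrac{n(n-1)}{2}t^2+\rho(t)$ with $|\rho(v)|\le C(n)|v|^3\le C(n)\varepsilon v^2$ (exactly as in the first step), integrating over $\mathbb{S}^{n-1}$ and using the constraint yields
\[
\Bigl|\,n\!\int_{\mathbb{S}^{n-1}}\! v\,d\mathcal{H}^{n-1}+\tfrac{n(n-1)}{2}\!\int_{\mathbb{S}^{n-1}}\! v^2\,d\mathcal{H}^{n-1}\Bigr|=\Bigl|\int_{\mathbb{S}^{n-1}}\!\rho(v)\,d\mathcal{H}^{n-1}\Bigr|\le C(n)\varepsilon\|v\|^2_{L^2},
\]
and dividing by $n$ gives the claim.

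I do not anticipate any genuine obstacle: the statement is essentially a bookkeeping exercise based on Taylor's theorem (it is quoted from \cite{Fu}). The only points requiring a little care are choosing the order of expansion correctly---second order, so that the remainder is at least cubic and can furnish the gain $\varepsilon$---and keeping the denominators $1+v$ uniformly away from $0$, which is immediate from $\|v\|_{L^\infty}\le\varepsilon<1$.
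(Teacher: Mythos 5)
Your argument is correct, and it is worth noting that the paper offers no proof of this lemma at all: it is quoted from Fuglede \cite{Fu}. Your direct verification --- second-order Taylor expansion of $(1+t)^{n-1}$ with Lagrange remainder so that the cubic term is absorbed via $|v|^3\le\varepsilon v^2$, the elementary inequality $\sqrt{1+s}\ge 1+\tfrac{s}{2}-\tfrac{s^2}{8}$ for $s\ge0$, and the volume constraint rewritten as $\int_{\mathbb{S}^{n-1}}\bigl[(1+v)^n-1\bigr]\,d\mathcal{H}^{n-1}=0$ --- is precisely the standard computation underlying Fuglede's estimates, and each step checks out (in the first and third estimates the critical coefficients $(n-1)(n-2)(n-3)$ and $n(n-1)(n-2)$ vanish for the low dimensions where the exponent of $1+\theta$ would be negative, so those remainders carry genuinely dimensional constants). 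The one place where your constants are not purely dimensional is the second estimate: the bounds $\bigl|1-(1+v)^{-2}\bigr|\le C\varepsilon$ and $|\nabla v|^4(1+v)^{-4}\le C\varepsilon|\nabla v|^2$ rest on $(1+v)^{-1}\le(1-\varepsilon)^{-1}$, which degenerates as $\varepsilon\to1^-$, whereas the statement claims $C=C(n)$ for every $0<\varepsilon<1$. You flag this and restrict to small $\varepsilon$, which is all the paper ever uses (Proposition \ref{key_prop} and Theorem \ref{main_result} invoke the lemma only for $\varepsilon<\varepsilon_0$); if you want the statement verbatim, split according to the sign of $v$: for $v\le0$ one has $(1+v)^{-2}\ge1$, so $\sqrt{1+|\nabla v|^2(1+v)^{-2}}\ge\sqrt{1+|\nabla v|^2}\ge 1+\tfrac{|\nabla v|^2}{2}-\tfrac{|\nabla v|^4}{8}$ and the left-hand side is at most $\tfrac{\varepsilon}{8}|\nabla v|^2$; for $v>0$ one has $(1+v)^{-1}\le1$, so your two error terms are bounded by $\tfrac{|\nabla v|^2}{2}(2v+v^2)+\tfrac{|\nabla v|^4}{8}\le 2\varepsilon|\nabla v|^2$. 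With that small repair the proof gives the lemma exactly as stated, with an absolute constant in the second estimate.
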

As a consequence of the analyticity of $h_R$ and $f_R$, defined in \eqref{hrho1}-\eqref{frho1}-\eqref{hrho}-\eqref{frho}, the following Lemma holds.
\begin{lem} \label{analiticity} 
Let $n\geq 2$ and $0<r<R$. There exists $K=K(n,r,R) > 0$ such that for any $0<\varepsilon < 1$ and for any $v$ parametrizing a nearly spherical set $\Omega_0$ such that $||v||_{W^{1,\infty}}\le  \varepsilon$ and $|\Omega_0|=|B_{R}|$, then
\begin{align*}
\left|h_{R}(1+v) - h_{R}(1) - h_{R}'(1)v - h_{R}''(1) \frac{v^2}{2}\right| \le K\varepsilon v^2\ \text{on}\ \mathbb{S}^{n-1},\\
\left|f_{R}(1+v) - f_{R}(1) - f_{R}'(1)v - f_{R}''(1) \frac{v^2}{2}\right| \le K\varepsilon v^2\ \text{on}\ \mathbb{S}^{n-1}.
\end{align*}
\end{lem}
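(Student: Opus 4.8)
The two inequalities are nothing but the second-order Taylor expansions of the one-variable functions $h_{R}$ and $f_{R}$ about the point $t=1$, evaluated at $t=1+v(\xi)$, the cubic remainder being absorbed into the prefactor $\varepsilon$ by means of the elementary bound $|v(\xi)|^{3}=|v(\xi)|\,v(\xi)^{2}\le \varepsilon\, v(\xi)^{2}$, valid since $||v||_{W^{1,\infty}}\le\varepsilon$. So the proof splits into two steps: first, recording that $h_{R}$ and $f_{R}$ are of class $C^{3}$ on a compact neighbourhood of $t=1$ that can be chosen independently of $\varepsilon$; second, applying Taylor's formula with Lagrange remainder.

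For the first step, from the definitions \eqref{hrho1}--\eqref{frho} one reads off that $h_{R}$ is, in both cases $n=2$ and $n\ge 3$, a product/composition of the logarithm (resp.\ of powers) with affine maps, and that $f_{R}=h_{R}'/(2R)$; hence $h_{R}$ and $f_{R}$ are real-analytic on $(0,+\infty)$, their only singularity being at $t=0$. Since $||v||_{W^{1,\infty}}\le\varepsilon<1$, the argument $1+v(\xi)$ lies in $[1-\varepsilon,1+\varepsilon]$, a compact subinterval of $(0,2)\subset(0,+\infty)$ bounded away from $0$; fixing once and for all a compact interval $I\Subset(0,+\infty)$ with $[1-\varepsilon,1+\varepsilon]\subset I$ (the estimates being applied only for small $\varepsilon$, one may take e.g.\ $I=[\tfrac12,\tfrac32]$), the quantity
\[
M:=\max\bigl\{\,\sup_{t\in I}|h_{R}'''(t)|\,,\ \sup_{t\in I}|f_{R}'''(t)|\,\bigr\}
\]
is finite and depends only on $n$, $r$ and $R$.

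For the second step, write $g$ for either $h_{R}$ or $f_{R}$ and apply Taylor's theorem with Lagrange remainder at $t_{0}=1$: for each $\xi\in\mathbb{S}^{n-1}$ there is $\tau_{\xi}$ lying between $1$ and $1+v(\xi)$, hence $\tau_{\xi}\in I$, such that
\[
g(1+v(\xi))=g(1)+g'(1)\,v(\xi)+g''(1)\,\frac{v(\xi)^{2}}{2}+\frac{g'''(\tau_{\xi})}{6}\,v(\xi)^{3}.
\]
Consequently
\[
\Bigl|\,g(1+v(\xi))-g(1)-g'(1)\,v(\xi)-g''(1)\,\tfrac{v(\xi)^{2}}{2}\,\Bigr|\le\frac{M}{6}\,|v(\xi)|^{3}\le\frac{M}{6}\,\varepsilon\,v(\xi)^{2},
\]
using $|v(\xi)|\le\varepsilon$. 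Setting $K:=M/6=K(n,r,R)$ and specialising to $g=h_{R}$ and to $g=f_{R}$ gives precisely the two asserted estimates.

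There is no genuine difficulty in this lemma: it is an immediate consequence of the smoothness of $h_{R}$ and $f_{R}$ near $t=1$ and, as announced just before the statement, of their analyticity. The only point deserving care is the uniformity of $K$ with respect to $\varepsilon$, which is secured by keeping the remainder's argument $\tau_{\xi}$ inside a fixed compact interval bounded away from the singularity $t=0$ of $h_{R}$ and $f_{R}$; the volume normalisation $|\Omega_{0}|=|B_{R}|$ plays no role here beyond fixing $R$ as the equivalent radius of $\Omega_{0}$ in the parametrisation \eqref{nearly}.
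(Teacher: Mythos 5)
Your argument is correct and is exactly what the paper has in mind: the paper gives no proof of Lemma \ref{analiticity} at all, asserting it ``as a consequence of the analyticity'' of $h_R$ and $f_R$, and your Taylor expansion with Lagrange remainder, with the cubic term absorbed via $|v|^3\le\varepsilon v^2$, is the intended one-line justification made explicit.

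One small point of care: with your choice $I=[\tfrac12,\tfrac32]$ the estimate covers only $\varepsilon\le\tfrac12$, whereas the lemma is stated for all $0<\varepsilon<1$; and indeed no $\varepsilon$-uniform $K$ can exist on the full range, since $h_R$ and $f_R$ blow up as $t\to0^+$, so the remainder at a point where $v\approx-\varepsilon$ is unbounded as $\varepsilon\to1^-$ while $K\varepsilon v^2$ stays bounded. The clean repair, consistent with how the lemma is actually invoked in Proposition \ref{key_prop}, is to use the constraint $\varepsilon\le 1-r/R$ (which the paper imposes anyway so that $B_r\Subset\Omega_0$), giving $1+v(\xi)\ge r/R$; taking $I=[r/R,2]$ then yields $M$, hence $K$, depending only on $n$, $r$ and $R$, exactly as claimed.
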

Furthermore, the following Poincar\'e inequality holds.
\begin{lem} $\textit{(Poincar\'e inequality)}$ \label{poincar} Let $n\ge 2$ and $R>0$, then there exists a positive constant $C=C(n)$ such that for any $0<\varepsilon<1$ and for any function $v$ parametrizing a nearly spherical set $\Omega_0$ such that $||v||_{W^{1,\infty}}\le  \varepsilon$ and $|\Omega_0|=|B_{R}|$, then
\begin{equation*}
\|\nabla v \|^2_{L^2} \ge (n-1)(1-C\varepsilon) \|v\|^2_{L^2}.
\end{equation*}
\end{lem}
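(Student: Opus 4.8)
The plan is to use a spherical harmonics decomposition of $v$ on $\mathbb{S}^{n-1}$ together with the third estimate of Lemma \ref{fuglede}, which encodes the volume constraint. Write $v = \sum_{k\ge 0} v_k$, where $v_k$ is the projection of $v$ onto the $k$-th eigenspace of the Laplace--Beltrami operator $-\Delta_{\mathbb{S}^{n-1}}$, whose eigenvalue is $\lambda_k = k(k+n-2)$. Then $\|\nabla v\|_{L^2}^2 = \sum_{k\ge 1}\lambda_k\|v_k\|_{L^2}^2$ and $\|v\|_{L^2}^2 = \sum_{k\ge 0}\|v_k\|_{L^2}^2$. Since $\lambda_0 = 0$ and $\lambda_1 = n-1$ while $\lambda_k \ge \lambda_2 = 2n > n-1$ for $k\ge 2$, the only obstruction to the clean Poincaré inequality $\|\nabla v\|_{L^2}^2 \ge (n-1)\|v\|_{L^2}^2$ is the presence of the zeroth mode $v_0 = \frac{1}{|\mathbb{S}^{n-1}|}\int_{\mathbb{S}^{n-1}} v\,d\mathcal{H}^{n-1}$, i.e.\ the average of $v$.

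The key point is that the volume constraint forces $v_0$ to be quadratically small. Indeed, the third inequality in Lemma \ref{fuglede} gives
\[
\left| \int_{\mathbb{S}^{n-1}} v\,d\mathcal{H}^{n-1} \right| \le \frac{n-1}{2}\|v\|_{L^2}^2 + C\varepsilon\|v\|_{L^2}^2 \le C'\|v\|_{L^2}^2,
\]
so that $|v_0| \le \frac{C'}{|\mathbb{S}^{n-1}|}\|v\|_{L^2}^2$ and hence, using $\|v\|_{L^2}\le \|v\|_{L^\infty}|\mathbb{S}^{n-1}|^{1/2}\le \varepsilon |\mathbb{S}^{n-1}|^{1/2}$, we get $\|v_0\|_{L^2}^2 = |\mathbb{S}^{n-1}|\,v_0^2 \le C''\varepsilon^2\|v\|_{L^2}^2 \le C''\varepsilon\|v\|_{L^2}^2$ for $\varepsilon<1$. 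I would then estimate
\[
\|\nabla v\|_{L^2}^2 = \sum_{k\ge 1}\lambda_k\|v_k\|_{L^2}^2 \ge (n-1)\sum_{k\ge 1}\|v_k\|_{L^2}^2 = (n-1)\left(\|v\|_{L^2}^2 - \|v_0\|_{L^2}^2\right) \ge (n-1)(1 - C''\varepsilon)\|v\|_{L^2}^2,
\]
which is the claimed inequality with $C = C''$ (renaming constants as needed so that $C$ depends only on $n$).

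I expect the main obstacle to be purely bookkeeping: tracking the dependence of the constants so the final $C$ depends only on $n$ (not on $R$), and justifying $\|v_0\|_{L^2}^2 \le C\varepsilon\|v\|_{L^2}^2$ from the $L^\infty$ bound $\|v\|_{W^{1,\infty}}\le\varepsilon$ — this is where the smallness hypothesis is genuinely used, converting the quadratic smallness of the mean into the stated $\varepsilon$-perturbed spectral gap. No serious analytic difficulty arises; everything reduces to the spectral gap $\lambda_1 = n-1$ of the sphere and the Fuglede-type control of the volume constraint.
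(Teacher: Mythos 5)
Your proposal is correct and follows essentially the same route as the paper: a spherical harmonics expansion, the spectral gap $\lambda_1=n-1$, and the third estimate of Lemma \ref{fuglede} combined with the $L^\infty$ bound $\|v\|_{L^\infty}\le\varepsilon$ to show the zeroth mode satisfies $c_0^2\le C\varepsilon\|v\|_{L^2}^2$. Your write-up is in fact slightly more explicit than the paper's in converting the quadratic smallness of the mean into the $\varepsilon$-perturbed gap.
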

\begin{proof}
The function $v\in L^2(\mathbb S^{n-1})$ admits a harmonic expansion 
in the sense that there exists a family of $n$-dimensional spherical harmonics $\{ H_j(\xi) \}_{j\in\N}$ such that
\[
v(\xi)=\sum_{j=0}^{+\infty}c_j H_j(\xi), \quad \xi\in\mathcal S^{n-1}\quad\text{with}\quad \|H_j\|_{L^2(\mathbb{S}^{n-1})}=1,
\]
where 
\begin{equation*}
c_j=\ds \langle v,H_j\rangle_{L^2(\mathbb{S}^{n-1})}=\int_{\mathbb S^{n-1}}v(\xi)H_j(\xi) d\mathcal H^{n-1}.
\end{equation*}
and $H_j$ satisfying
\[
\Delta_{\mathcal S^{n-1}} H_j=j(j+n-2)H_j,\quad\forall\ j\in\N,
\]
where $\Delta_{\mathcal S^{n-1}}$ is the Laplace-Beltrami operator. Furthermore the following identities 
hold true
\begin{align}\label{perceval}
||v||_{L^2(\mathcal S^{n-1})}^2 & =\sum_{j=0}^\infty c_j ^2,\\
\label{perceval_grad}
||\nabla v||_{L^2(\mathcal S^{n-1})}^2 & =\sum_{j=1}^\infty j(j+n-2) c_j ^2.
\end{align}
Since $H_0 = (n\omega_n)^{-\frac{1}{2}}$, we have
\begin{align*}
|c_0| &= (n\omega_n)^{-\frac{1}{2}} \left| \int_{\mathbb S^{n-1}}v(\xi)d\mathcal H^{n-1} \right| \le \\& (n\omega_n)^{-\frac{1}{2}} \left| \int_{\mathbb S^{n-1}}v^2(\xi)d\mathcal H^{n-1} \right|\left( \frac{n-1}{2} + C\varepsilon \right) = C\varepsilon \|v\|_{L^2},
\end{align*}
where the constant C has been renamed. Using this estimate, by \eqref{perceval} and \eqref{perceval_grad}, we have
\begin{equation*}
\|v\|_{L^2} = \sum_{j=0}^\infty c^2_j = c^2_0 + \sum_{j=1}^\infty c_j ^2 \le C\varepsilon \|v\|^2_{L^2}+\sum_{j=1}^\infty c_j ^2,
\end{equation*}
and
\begin{equation*}
\|\nabla v\|_{L^2} = \sum_{j=1}^\infty j(j+n-2) c_j ^2 \ge (n-1) \sum_{j=1}^\infty  c_j ^2 \ge (n-1)(1-C\varepsilon) \|v\|^2_{L^2},
\end{equation*}
which concludes the proof.
\end{proof}

Now we give a key estimate for the main Theorem.
\begin{prop} \label{key_prop} 
Let $n\geq 2$, $r>0$, $\omega>0$ and  let  $R>r$  be such that $|A_{r,R}|=\omega$. 	There exist two positive constants $K>0$ and $0\leq\varepsilon_0<1-r/R$, 
depending on $n$, $r$ and $\omega$ only, such that for any $0< \varepsilon < \varepsilon_0$, for any  $\Omega=\Omega_0\setminus\overline{B_{r}}$ belonging to $\mathcal{A}_r$, with $\Omega_0$ nearly spherical set parametrized by $v$ such that $||v||_{W^{1,\infty}}\le  \varepsilon$ and $|\Omega|=\omega$, then
\begin{equation}\label{key_estimate}
\begin{split}
\frac{V(\Omega^{\sharp})P(\Omega) - P(\Omega^{\sharp})V(\Omega)}{ n\omega_n} & =\\
=f_{R}(1) \ds \int_{\mathbb{S}^{n-1}} h_{R}  (1 &+v(\xi))  (1+v(\xi))^{n-1}\sqrt{1+\frac{|\nabla v(\xi)|^2}{(1+v(\xi))^2}}\,d\mathcal{H}^{n-1}  \\
-h_{R}(1) \ds \int_{\mathbb{S}^{n-1}} & f_{R} (1+v(\xi))(1+v(\xi))^{n-1} \, d\mathcal{H}^{n-1}  \ge K \int_{\mathbb{S}^{n-1}} v^2 \, d\mathcal{H}^{n-1}.
\end{split} 
\end{equation}
\end{prop}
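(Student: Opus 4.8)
\textbf{Proof strategy for Proposition \ref{key_prop}.} The plan is to expand both integrals on the left-hand side of \eqref{key_estimate} to second order in $v$ using Lemmas \ref{fuglede} and \ref{analiticity}, collect the constant ($v^0$) and linear ($v^1$) terms, verify that they cancel, and then identify the quadratic form in $v$ and $\nabla v$ that remains, which should be controlled below by $\|v\|_{L^2}^2$ via the Poincar\'e inequality in Lemma \ref{poincar}.

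First I would introduce the shorthand $h_0 = h_R(1)$, $h_1 = h_R'(1)$, $h_2 = h_R''(1)$ and similarly $f_0, f_1, f_2$ for $f_R$, and record the exact algebraic relations between them coming from the definitions \eqref{hrho1}--\eqref{frho}. The crucial identities are $f_R(1) = h_R'(1)/(2R)$, so $f_0 = h_1/(2R)$, and differentiating, $f_R'(t) = h_R''(t)/(2R)$, so $f_1 = h_2/(2R)$ and $f_2 = h_R'''(1)/(2R)$; also $\sigma_1(A_{r,R}) = f_0/h_0$ from the previous Lemma. These will be what makes the lower-order terms disappear. Using Lemma \ref{analiticity} to replace $h_R(1+v)$ by $h_0 + h_1 v + h_2 v^2/2$ and $f_R(1+v)$ by $f_0 + f_1 v + f_2 v^2/2$ up to an error of order $\varepsilon v^2$, and Lemma \ref{fuglede} to replace $(1+v)^{n-1}$ by $1 + (n-1)v + (n-1)(n-2)v^2/2$ and the area element $\sqrt{1+|\nabla v|^2/(1+v)^2}$ by $1 + |\nabla v|^2/2$, each up to errors of order $\varepsilon(v^2 + |\nabla v|^2)$, I would multiply everything out and keep terms up to quadratic order. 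The $v^0$ terms give $f_0 h_0 |\mathbb{S}^{n-1}| - h_0 f_0 |\mathbb{S}^{n-1}| = 0$. The $v^1$ terms give a multiple of $\int_{\mathbb{S}^{n-1}} v$; this is not obviously zero on its own, but here the third estimate of Lemma \ref{fuglede}, $\int v = -\frac{n-1}{2}\int v^2 + O(\varepsilon \|v\|_{L^2}^2)$, converts the surviving linear term into an additional quadratic contribution plus a controllable error. (One must be slightly careful: the coefficient of $\int v$ is $f_0 h_1 + f_0 h_0 (n-1) - h_0 f_1 - h_0 f_0 (n-1) = f_0 h_1 - h_0 f_1$, which by the relations $f_0 = h_1/(2R)$, $f_1 = h_2/(2R)$ equals $(h_1^2 - h_0 h_2)/(2R)$, nonzero in general, so this substitution step is genuinely needed.)

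After these substitutions, the left-hand side of \eqref{key_estimate} equals $Q[v] + (\text{error bounded by } C\varepsilon(\|v\|_{L^2}^2 + \|\nabla v\|_{L^2}^2))$, where $Q[v] = \alpha \int_{\mathbb{S}^{n-1}} v^2 + \beta \int_{\mathbb{S}^{n-1}} |\nabla v|^2$ for explicit constants $\alpha = \alpha(n,r,R)$ and $\beta = \beta(n,r,R)$ obtained by bookkeeping the quadratic coefficients. I expect $\beta = f_0 h_0/2 > 0$ (coming from the $|\nabla v|^2/2$ in the area element multiplied by the leading $f_0 h_0$), which is the term that ultimately survives; the coefficient $\alpha$ collects contributions from $h_2$, $f_2$, the $(n-1)(n-2)v^2/2$ terms, and the converted linear term, and its sign is not a priori controlled. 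This is where I expect the main obstacle to lie: one must show that the full quadratic form $Q[v]$, after using $\|\nabla v\|_{L^2}^2 \ge (n-1)(1-C\varepsilon)\|v\|_{L^2}^2$ from Lemma \ref{poincar}, dominates a strictly positive multiple of $\|v\|_{L^2}^2$. If $\alpha \ge 0$ this is immediate from the $\beta$ term alone; if $\alpha < 0$, one needs $\beta(n-1) + \alpha > 0$, i.e.\ a strict inequality among the explicit constants $h_0, h_1, h_2, f_0, f_1, f_2$ — an inequality that should follow from the concrete formulas \eqref{radeigfun}--\eqref{radeigval} and the fact that $z$ is strictly increasing and convex-type, but whose verification is the real computational heart of the argument. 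Choosing $\varepsilon_0$ small enough that the $C\varepsilon$ error terms absorb at most half of this positive margin (and $\varepsilon_0 < 1 - r/R$ so that the nearly spherical set genuinely contains $B_r$) then yields \eqref{key_estimate} with $K = K(n,r,\omega) > 0$, recalling that $R$ is determined by $n$, $r$, $\omega$ through $|A_{r,R}| = \omega$.
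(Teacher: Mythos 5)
Your expansion scheme is exactly the paper's: the same second-order Taylor bookkeeping via Lemmata \ref{fuglede} and \ref{analiticity}, the cancellation of the zero-order terms, the conversion of the surviving linear term $\bigl(f_R(1)h_R'(1)-f_R'(1)h_R(1)\bigr)\int_{\mathbb{S}^{n-1}}v$ into a quadratic contribution through the volume constraint (third estimate of Lemma \ref{fuglede}), and the final absorption of the gradient term by the Poincar\'e inequality of Lemma \ref{poincar}. Your coefficients are also correct: with $Q_1(1)=f_R(1)h_R'(1)-f_R'(1)h_R(1)$, $Q_2(1)=f_R(1)h_R''(1)-f_R''(1)h_R(1)$, $Q_3(1)=f_R(1)h_R(1)$, your $\beta$ is $Q_3(1)/2$, your $\alpha$ is $\tfrac12\bigl[Q_2(1)+(n-1)Q_1(1)\bigr]$, and your condition $\alpha+(n-1)\beta>0$ is precisely the paper's condition $(n-1)\bigl[Q_1(1)+Q_3(1)\bigr]+Q_2(1)>0$, which determines $K$.

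The genuine gap is that you never prove this strict inequality (nor the positivity of $Q_1(1)$ and $Q_3(1)$, which you implicitly use to keep the Fuglede and Poincar\'e substitutions pointing in the right direction). You state that it ``should follow from the concrete formulas \eqref{radeigfun}--\eqref{radeigval} and the fact that $z$ is strictly increasing and convex-type,'' but this is exactly the computational heart of the proposition, and it is not a soft consequence of monotonicity: $Q_2(1)$ contains genuinely negative terms (e.g.\ for $n\ge 3$ a term $-\frac{(n-1)^2(n-2)}{R^{n-1}}h_R(1)\sqrt{h_R(1)}$, and for $n=2$ a term $-\frac{2}{R}h_R(1)\sqrt{h_R(1)}$), so the positivity of $(n-1)[Q_1(1)+Q_3(1)]+Q_2(1)$ rests on cancellations that must be checked from the explicit expressions \eqref{hrho1}--\eqref{frho}. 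The paper does this by direct computation, separately for $n=2$ (where the sum reduces to $\frac{2}{R}\bigl(h_R(1)+\sqrt{h_R(1)}\bigr)>0$) and for $n\ge 3$, using the representation $Q_1=f_R^2\,(h_R/f_R)'$ and $Q_2=Q_1'$ to organize the derivatives. Without carrying out this verification your argument yields a quadratic form of unknown sign, hence no positive constant $K$, and the proposition is not established; everything before that point is routine expansion.
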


\begin{proof} 
Using Lemmata \ref{fuglede}, \ref{analiticity}, \ref{poincar}, we have
\begin{equation} \label{sviluppi}
\begin{split}
 & f_{R}(1) \ds \int_{\mathbb{S}^{n-1}} h_{R}(1+v(\xi))(1+v(\xi))^{n-1}\sqrt{1+\frac{|\nabla v(\xi)|^2}{(1+v(\xi))^2}}\,d\mathcal{H}^{n-1} \\
&\qquad - h_{R}(1) \ds \int_{\mathbb{S}^{n-1}} f_{R}(1+v(\xi))(1+v(\xi))^{n-1} \, d\mathcal{H}^{n-1}  \\
& \ge \int_{\mathbb{S}^{n-1}} v\left(f_{R}(1) h'_{R}(1)-f'_{R}(1)h_{R}(1)\right) \, d\mathcal{H}^{n-1}\\
& \qquad + \int_{\mathbb{S}^{n-1}} \frac{v^2}{2}[f_{R}(1) h''_{R}(1)-f''_{R}(1)h_{R}(1)+2(n-1)(f_{R}(1) h'_{R}(1)-f'_{R}(1)h_{R}(1))] \, d\mathcal{H}^{n-1} \\
&\qquad+\int_{\mathbb{S}^{n-1}} f_{R}(1)h_{R}(1) \frac{|\nabla v|^2}{2} d\mathcal{H}^{n-1} - \varepsilon K_1 \|\nabla v \|^2_{L^2},
\end{split}
\end{equation}
where $K_1$ is a positive constant.
Let us set
\begin{equation*}
\begin{split}
& Q_1(t) := f_{R}(t) h'_{R}(t)-f'_{R}(t)h_{R}(t), \\
& Q_2(t) := f_{R}(t) h''_{R}(t)-f''_{R}(t)h_{R}(t),\\
& Q_3(t) := f_{R}(t)h_{R}(t),
\end{split}
\end{equation*}

In order to show \eqref{key_estimate}, we need to prove
\begin{enumerate}
    \item $Q_1(1)>0$,
    \item $Q_3(1)>0$,
    \item $(n-1)\left[Q_1(1)+Q_3(1)\right] +Q_2(1)>0$.
\end{enumerate}
Indeed, when (1), (2), (3) hold, then, by using Lemmata \ref{fuglede} and \ref{poincar}, the last term in \eqref{sviluppi} can be estimated as
\begin{equation*}
\begin{split}
& Q_1(1) \int_{\mathbb{S}^{n-1}} v \, d\mathcal{H}^{n-1} + (2(n-1)Q_1(1) + Q_2(1))  \int_{\mathbb{S}^{n-1}}  \frac{v^2}{2} \, d\mathcal{H}^{n-1}\\
&\qquad\qquad\qquad\qquad\qquad\qquad\qquad+ Q_3(1) \int_{\mathbb{S}^{n-1}} \frac{|\nabla v|^2}{2} d\mathcal{H}^{n-1} - \varepsilon K_1 \|\nabla v \|^2_{L^2}\\
&\ge - \frac{n-1}{2} Q_1(1) \int_{\mathbb{S}^{n-1}} v^2 \, d\mathcal{H}^{n-1}   - \varepsilon K_2 \| v \|^2_{L^2} + \left((n-1)Q_1(1) + \frac{Q_2(1)}{2}\right) \int_{\mathbb{S}^{n-1}} v^2  \, d\mathcal{H}^{n-1} \\
&\qquad\qquad\qquad\qquad\qquad\qquad\qquad+ \frac{n-1}{2} Q_3(1) \int_{\mathbb{S}^{n-1}} v^2 - \varepsilon K_3 \| v \|^2_{L^2} - \varepsilon K_1 \|\nabla v \|^2_{L^2}\\
& =  \frac{1}{2}\left\{ (n-1)[Q_1(1)+Q_3(1)]+Q_2(1)\right\} \| v \|^2_{L^2}  \\& \qquad\qquad\qquad\qquad\qquad\qquad\qquad
- \varepsilon K_2 \| v \|^2_{L^2}  - \varepsilon K_3 \| v \|^2_{L^2} - \varepsilon K_1 \|\nabla v \|^2_{L^2} \\
& \ge K \| v \|^2_{L^2} - \varepsilon K_4 \| v \|^2_{W^{1,2}(\mathbb{S}^{n-1})},
\end{split}
\end{equation*}
where we denoted $K=\frac{1}{2}\left\{(n-1)\left[Q_1(1)+Q_3(1)\right] +Q_2(1)\right\}>0$ and $K_4 = \max \{K_1,K_2,K_3\}$.
The proof concludes by choosing $\varepsilon$ small enough.

It remains to prove (1), (2), (3) by distinguishing the bidimensional from the higher dimensional case. We note that
\begin{equation} \label{Q1}
Q_1(t) = f^2_{R}(t) \left[ \frac{h_{R}(t)}{f_{R}(t)} \right]'= 2R f^2_{R}(t) \left[ \frac{h_{R}(t)}{h'_{R}(t)} \right]',
\end{equation}
and
\begin{equation} \label{Q2}
    Q_2(t) = Q_1'(t) = \left[f^2_{R}(t)\right]'\left[ \frac{h_{R}(t)}{f_{R}(t)} \right]' + f^2_{R}(t) \left[ \frac{h_{R}(t)}{f_{R}(t)} \right]''.
\end{equation}

{\bf Case 1.} Let be $n=2$. We observe that
\begin{equation*}
\frac{h_{R}(t)}{f_{R}(t)} = R t (\ln(tR) -\ln r),
\end{equation*}
is positive and strictly increasing, since it is a product of two strictly increasing positive functions. Hence $Q_1(t) > 0$ and in particular 
\begin{equation*}
Q_1(1)= \frac{h_R(1)}{R}\left( \sqrt{h_R(1)}+1\right) >0.
\end{equation*}\\
Moreover, it is clear that
\begin{equation*}
Q_3(1) = \frac{h_{R}(1) \sqrt{h_{R}(1)}}{R} >0.
\end{equation*}
Let us now calculate all the terms in \eqref{Q2} and evaluate them for $t=1$. We have

\begin{align*}
& \left[ \frac{h_{R}(t)}{f_{R}(t)} \right]'_{t=1} = R \left(\sqrt{h_{R}(t)} + 1\right)_{t=1}= R \left(\sqrt{h_{R}(1)} + 1\right) >0 ,\\
& \left[ \frac{h_{R}(t)}{f_{R}(t)} \right]''_{t=1} = \left( \frac{R}{t}\right)_{t=1}= R>0  
\end{align*}
and
\begin{align*}
&  f^2_{R}(1)= \frac{h_{R}(1)}{R^2} >0,\\
&\left[f^2_{R}(t)\right]'_{t=1} = \left[\frac{2R}{(tR)^3}\left(\sqrt{h_{R}(t)} -h_{R}(t)\right)\right]= \frac{2}{R^2}\left(\sqrt{h_{R}(1)} -h_{R}(1)\right).
\end{align*}
Summing up, estimate (3) follows by 
\begin{equation*}
\begin{split}
Q_1(1)+&Q_3(1)+Q_2(1) = \frac{h_R(1)\sqrt{h_R(1)}}{R}+ \frac{h_R(1)}{R}+\frac{h_R(1)\sqrt{h_R(1)}}{R}+\\ &2\frac{\sqrt{h_R(1)}}{R}- 2\frac{h_R(1)\sqrt{h_R(1)}}{R} + \frac{h_R(1)}{R} = \frac{2}{R}(h_R(1)+\sqrt{h_R(1)}) >0.
\end{split}    
\end{equation*}


 {\bf Case 2.} For $n\ge 3$, from \eqref{Q1} we have
\begin{equation*}
\frac{h_{R}(t)}{h'_{R}(t)} = \frac{(tR)^{n-1}}{2(n-2)R} \left( \dfrac{1}{r^{n-2}}-\dfrac{1}{(tR)^{n-2}}\right),
\end{equation*}
that is a strictly increasing function, since it is product of two strictly increasing and positive functions. Hence $Q_1(t) > 0$ and, in particular
\begin{equation*}
    Q_1(1) =  \frac{(n-1)(n-2)}{R^{n-1}} h_R(1)\sqrt{h_R(1)}+ \frac{2(n-2)^2}{R^{2n-3}}h_R(1)>0.
\end{equation*}
Moreover, it is easily seen that 
\begin{equation*}
Q_3(1)= \frac{n-2}{R^{n-1}} h_R(1)\sqrt{h_R(1)}>0.
\end{equation*}
Eventually, we have
\begin{equation*}
    \begin{split}
        Q_2(1) &= \frac{(n-2)^3}{R^{3n-3}} \sqrt{h_R(1)}-\frac{(n-1)^2(n-2)}{R^{n-1}}h_R(1) \sqrt{h_R(1)} \\
        &+ \frac{(n-1)(n-2)^2}{R^{n}}h_R(1) \sqrt{h_R(1)} + \frac{(n-1)(n-2)^2}{R^{2n-2}}h_R(1) ,
    \end{split}
\end{equation*}
and therefore, it follows that $(n-1)\left[Q_1(1)+Q_3(1)\right]+Q_2(1)>0$.
\end{proof}
We use the previous result to give a stability result in a quantitative form.

\begin{thm} \label{imp}
Let $n\geq 2$, $r>0$, $\omega>0$ and  let  $R>r$  be such that $|A_{r,R}|=\omega$. 	There exist two positive constants $K>0$ and $0\leq\varepsilon_0<1-r/R$, 
depending on $n$, $r$ and $\omega$ only, such that for any $0< \varepsilon < \varepsilon_0$, for any  $\Omega=\Omega_0\setminus\overline{B_{r}}$ belonging to $\mathcal{A}_r$, with $\Omega_0$ nearly spherical set parametrized by $v$ such that $||v||_{W^{1,\infty}}\le  \varepsilon$, and $|\Omega|=\omega$, then
\begin{equation*}
\sigma_1(A_{r,R}) \ge  \sigma_1(\Omega) \left( 1+K(n,r,\omega) \int_{\mathbb{S}^{n-1}} v^2(\xi)\,d\mathcal{H}^{n-1}\right).
\end{equation*}
\end{thm}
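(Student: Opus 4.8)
The plan is to combine the key estimate of Proposition~\ref{key_prop} with the Rayleigh-quotient identity from the Lemma preceding Lemma~\ref{fuglede} and with the Poincaré inequality of Lemma~\ref{poincar}. First I would fix $n$, $r$, $\omega$, and $R>r$ with $|A_{r,R}|=\omega$, and let $K>0$ and $\varepsilon_0\in[0,1-r/R)$ be the constants furnished by Proposition~\ref{key_prop}. For $\Omega=\Omega_0\setminus\overline{B_r}\in\mathcal{A}_r$ with $\Omega_0$ nearly spherical parametrized by $v$, $\|v\|_{W^{1,\infty}}\le\varepsilon<\varepsilon_0$ and $|\Omega|=\omega$, write $\Omega^{\sharp}=A_{r,R}$ so that $V(\Omega^{\sharp})=V(A_{r,R})$ and $P(\Omega^{\sharp})=P(A_{r,R})$, and recall that $\sigma_1(A_{r,R})=V(A_{r,R})/P(A_{r,R})=f_R(1)/h_R(1)$.

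The core of the argument is the chain
\begin{equation*}
\sigma_1(A_{r,R}) - \sigma_1(\Omega)\ \ge\ \frac{V(A_{r,R})}{P(A_{r,R})} - \frac{V(\Omega)}{P(\Omega)}\ =\ \frac{V(A_{r,R})P(\Omega) - P(A_{r,R})V(\Omega)}{P(A_{r,R})P(\Omega)},
\end{equation*}
where the first inequality uses $\sigma_1(\Omega)\le V(\Omega)/P(\Omega)$ from \eqref{pertquot}. By Proposition~\ref{key_prop}, the numerator on the right equals $n\omega_n$ times the quantity bounded below by $K\int_{\mathbb{S}^{n-1}}v^2\,d\mathcal{H}^{n-1}$, hence
\begin{equation*}
\sigma_1(A_{r,R}) - \sigma_1(\Omega)\ \ge\ \frac{n\omega_n K}{P(A_{r,R})P(\Omega)}\int_{\mathbb{S}^{n-1}}v^2\,d\mathcal{H}^{n-1}.
\end{equation*}
To convert this into the multiplicative form in the statement, I would divide by $\sigma_1(\Omega)$ and bound $\sigma_1(\Omega)/\big(P(A_{r,R})P(\Omega)\big) = V(\Omega)/\big(P(A_{r,R})P(\Omega)^2\big)$, or more simply write the target inequality as $\sigma_1(A_{r,R})\ge\sigma_1(\Omega)+\sigma_1(\Omega)\,\widetilde K\int v^2$ and absorb everything into a single constant: it suffices to show that $\dfrac{n\omega_n K}{\sigma_1(\Omega)P(A_{r,R})P(\Omega)}$ is bounded below by a positive constant depending only on $n,r,\omega$.

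The main obstacle is precisely this last uniformity step: one must produce a lower bound for that ratio that does not degenerate as $v$ varies over the admissible class. For this I would use the explicit expressions \eqref{radeigfun}–\eqref{radeigval} to see that $P(A_{r,R})$ and $\sigma_1(A_{r,R})$ depend only on $n,r,R$ (hence only on $n,r,\omega$), and then control $P(\Omega)=\int_{\partial\Omega_0}z^2\,d\mathcal{H}^{n-1}$ and $V(\Omega)=\int_{\Omega}|\nabla z|^2\,dx$ from above and below uniformly in $\varepsilon<\varepsilon_0$: using the parametrization \eqref{pertquot}, $P(\Omega)=\int_{\mathbb{S}^{n-1}}h_R(1+v)(1+v)^{n-1}\sqrt{1+|\nabla v|^2/(1+v)^2}\,d\mathcal{H}^{n-1}$ and $V(\Omega)=\int_{\mathbb{S}^{n-1}}f_R(1+v)(1+v)^{n-1}\,d\mathcal{H}^{n-1}$, and since $h_R$ and $f_R$ are continuous and positive on the compact interval $[r/R,\,(1+\varepsilon_0)]$ while $|v|\le\varepsilon_0<1$, both integrands are pinched between positive constants times the corresponding values at $v\equiv0$, giving $c_1\le P(\Omega)\le c_2$ and $V(\Omega)\le c_3$ with $c_1,c_2,c_3>0$ depending only on $n,r,\omega$. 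Combining these bounds yields a constant $K(n,r,\omega)>0$ with $\sigma_1(A_{r,R})\ge\sigma_1(\Omega)\big(1+K(n,r,\omega)\int_{\mathbb{S}^{n-1}}v^2\,d\mathcal{H}^{n-1}\big)$, after possibly shrinking $\varepsilon_0$ once more, which completes the proof.
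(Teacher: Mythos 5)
Your proposal is correct and follows essentially the same route as the paper: both start from Proposition~\ref{key_prop}, use $\sigma_1(A_{r,R})=V(A_{r,R})/P(A_{r,R})$ together with $\sigma_1(\Omega)\le V(\Omega)/P(\Omega)$ from \eqref{pertquot}, and then pass to the multiplicative form by bounding $P(A_{r,R})V(\Omega)$ (equivalently $\sigma_1(\Omega)P(A_{r,R})P(\Omega)$) uniformly from above -- the paper via monotonicity of $f_R$ and $1+v\le 2$, you via continuity and positivity of $f_R,h_R$ on a compact interval, which is the same idea. Only note that your parenthetical identity $\sigma_1(\Omega)/\bigl(P(A_{r,R})P(\Omega)\bigr)=V(\Omega)/\bigl(P(A_{r,R})P(\Omega)^2\bigr)$ should be an inequality, but your ``more simply'' route avoids this and is the one that matches the paper.
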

\begin{proof}
From Proposition \ref{key_prop} we know that there exists $K>0$ such that
\begin{equation*}
P(A_{r,R})P(\Omega) \left( \frac{V(A_{r,R})}{P(A_{r,R})}-\frac{V(\Omega)}{P(\Omega)}\right) \ge n \omega_n K \int_{\mathbb{S}^{n-1}} v^2 \, d\mathcal{H}^{n-1}.
\end{equation*}
Then, we have
\begin{equation*}
\begin{split}
\sigma_1(A_{r,R}) &= \frac{V(A_{r,R})}{P(A_{r,R})} \ge \frac{V(\Omega)}{P(\Omega)} + \frac{\ds n \omega_n K \int_{\mathbb{S}^{n-1}} v^2 \, d\mathcal{H}^{n-1}}{P(A_{r,R})P(\Omega)}\\
&=\frac{V(\Omega)}{P(\Omega)} \left(1+\frac{\ds n \omega_n K \int_{\mathbb{S}^{n-1}} v^2 \, d\mathcal{H}^{n-1}}{P(A_{r,R})V(\Omega)} \right)\\
&=\frac{V(\Omega)}{P(\Omega)} \left(1+\frac{\ds K \int_{\mathbb{S}^{n-1}} v^2 \, d\mathcal{H}^{n-1}}{h_{R}(1) \ds \int_{\mathcal{S}^{n-1}} f_{R}(1+v(\xi))(1+v(\xi))^{n-1} \, d\mathcal{H}^{n-1}} \right)\\
&\ge \frac{V(\Omega)}{P(\Omega)} \left(1+\frac{\ds K \int_{\mathbb{S}^{n-1}} v^2 \, d\mathcal{H}^{n-1}}{n\omega_n 2^{n-1} h_{R}(1) f_{R}(2)} \right) \ge \sigma_1(\Omega) \left( 1+K\int_{\mathbb{S}^{n-1}} v^2 \, d\mathcal{H}^{n-1}\right),
\end{split}
\end{equation*}
where the second inequality follows by the fact that $\|v\|_{W^{1,\infty}(\mathbb{S}^{n-1})} \le \varepsilon<1$ and by the monotonicity of $f_{R}(\cdot)$. 
\end{proof}
Eventually, the main result (Theorem \ref{main_result}) easily follows by Theorem \ref{imp}. Moreover, if $\Omega =  A_{r,R}$, then the function $v$ parametrizing the outer boundary is constantly equal to zero and equality in \eqref{main_ineq} holds.

\section*{Acknowledgments}
This work has been partially supported by GNAMPA of INdAM and by Progetto di eccellenza \lq\lq Sistemi distribuiti intelligenti\rq\rq\ of Dipartimento di Ingegneria Elettrica e dell'Informazione \lq\lq M. Scarano\rq\rq.


\end{document}